\title{Finite de Finetti bounds in relative entropy}
\author{Lampros Gavalakis\thanks{Univ Gustave Eiffel, 
	Univ Paris Est Creteil, CNRS, 
	LAMA UMR8050 F-77447 Marne-la-Vall{\'e}e, France. 
	Email: {\tt lampros.gavalakis@univ-eiffel.fr}. 
	L.G. has received funding from the European Union's Horizon 2020 
	research and innovation program
 	under the Marie Sklodowska-Curie grant agreement 
	No 101034255 {\euflag} and by the B{\'e}zout Labex, funded by ANR, 
	reference ANR-10-LABX-58.}
 \and
 Oliver Johnson\thanks{School of Mathematics, 
	University of Bristol, Woodland Road, Bristol
	BS8 1UG, U.K.
	Email: {\tt O.Johnson@bristol.ac.uk}.
	} 
\and 
Ioannis Kontoyiannis%
\thanks{Statistical Laboratory, DPMMS,
	University of Cambridge,
	Centre for Mathematical Sciences,
        Wilberforce Road,
	Cambridge CB3 0WB, U.K.
        Email: {\tt yiannis@maths.cam.ac.uk}.
}
}
\date{\today}
\newcommand{\tv}[2]{
\|#1-#2\|_{\mathrm{TV}}
}
\newcommand{\MS} {
\mathcal{M}(S)
}
\newcommand{\spro}{\mbox{\boldmath $s$}}
\newcommand{\ellp}{\mbox{\boldmath $\ell$}}
\newcommand{\ellps}{\mbox{\scriptsize\boldmath $\ell$}}
\newcommand{\RL}{{\mathbb R}}
\newcommand{\BBP}{{\mathbb P}}
\def\ba{\begin{align}}
\def\ea{\end{align}}
\def\ban{\begin{align*}}
\def\ean{\end{align*}}
\def\be{\begin{eqnarray}}
\def\ee{\end{eqnarray}}
\def\ben{\begin{eqnarray*}}
\def\een{\end{eqnarray*}}
\def\bqq{\begin{equation}}
\def\eqq{\end{equation}}
\def\bqqn{\begin{equation*}}
\def\eqqn{\end{equation*}}
\def\sq{$\Box$}
\def\qed{\ifmmode\sq\else{\unskip\nobreak\hfil
\penalty50\hskip1em\null\nobreak\hfil\sq
\parfillskip=0pt\finalhyphendemerits=0\endgraf}\fi\par\medbreak}
\newsavebox{\junk}
\savebox{\junk}[1.6mm]{\hbox{$|\!|\!|$}}
\def\til={{\widetilde =}}
\def\clA{{\cal A}}
\def\clB{{\cal B}}
\def\clF{{\cal F}}
\def\clM{{\cal M}}
\def\clS{{\cal S}}
 \def\eq#1/{(\ref{#1})}
\newtheorem{theorem}{Theorem}[section]
\newtheorem{lemma}[theorem]{Lemma}
\def\eq#1/{(\ref{e:#1})}
\def\bdes{\begin{description}}
\def\edes{\end{description}}
\def\notes#1{}
\definecolor{mag}{rgb}{0.7,0,0.3}
\definecolor{dgreen}{rgb}{0.1,0.5,0.1}
\definecolor{dred}{rgb}{.8,0,0}
\definecolor{gray}{rgb}{.8,.8,.8}
\definecolor{brown}{rgb}{0.6451,0.3706,0.1745}
\begin{document}

\maketitle

\begin{abstract}
    	We review old and recent finite de Finetti theorems in total 
	variation distance and in relative entropy,
	and we highlight their connections with bounds 
	on the difference between sampling 
	with and without replacement. 
	We also establish two new finite de Finetti theorems
	for exchangeable random vectors 
	taking values in arbitrary spaces.
	These bounds are tight, and they are 
	independent of the size and the dimension 
	of the underlying space.
\end{abstract}

\vspace{0.5in}

\section{Information in probability}

\noindent 
{\bf Early history. }
Classical information theory is built on probability theory. Not only 
is its core toolbox and vocabulary probabilistic but, since its early days,
sophisticated probabilistic techniques were effectively utilised
in many core information-theoretic problems.
For example, what is arguably the crowning achievement of information theory 
-- Shannon's channel coding theorem~\cite{shannon:48} -- is proved 
with one of the earliest applications of 
the ``probabilistic method''~\cite{alon-spencer:book}.
Since then, there has been a consistent influx of 
modern probabilistic
ideas and tools, employed in a very rich manner for the analysis
of information-theoretic questions.

Equally remarkable has been the mathematical
``traffic'' in the reverse direction.
Information-theoretic results and intuition 
have found applications in the study of
a rapidly growing
number of fundamental probabilistic phenomena.
In most cases, information theory not only informs our 
understanding of different aspects of stochastic behaviour,
but it also offers new avenues for exploring this
behaviour rigorously.

In 1958, H\'{a}jek used the properties of the
relative entropy to explore the absolute
continuity between~Gaussian measures~\cite{hajek:58a,hajek:58b}.
But the first major results established by the 
application of information-theoretic tools for 
the proof of purely probability-theoretic results
came in 1959, when 
Linnik suggested an information-theoretic proof of the 
central limit theorem~\cite{linnik:59},
and the following year, in 1960, when
R\'{e}nyi used information-theoretic methods to 
prove that finite-state Markov chains with all 
positive transitions convergence to equilibrium 
in relative entropy~\cite{renyi:61}. Then
in the 1970s and 80s, Csisz\'{a}r
used his `method of types'~\cite{csiszar:98}
to establish strong versions of a number of 
the standard results in
large deviations~\cite{csiszar:75,csiszar:84}.

\medskip

\noindent
{\bf Barron's influence. }
Over the past 40 years,
Andrew Barron has played a major role in promoting the information-theoretic
approach to probabilistic fundamentals. Here we briefly outline some of his
main contributions in this direction.

The Shannon-McMillan-Breiman theorem is probably the most
important foundational result of classical information 
theory, with important links to probability, statistics,
ergodic theory, dynamical systems, and beyond. Andrew Barron entered
the world of information theory in
1985 with his proof of the most general version of the
Shannon-McMillan-Breiman theorem for general ergodic
processes with densities~\cite{barron:1}.
In a very different direction, only a year later, he proved
what by now is the most well-known version of what has come
to be known as {\em the information-theoretic central limit 
theorem}~\cite{barron:clt}: He showed that the relative
entropy between the distribution of the standardised sum
of independent and identically distributed random variables,
and the Gaussian law, converges to zero if and only if it is
ever finite. This paper has been very influential, leading
to a number of subsequent breakthroughs 
including~\cite{artstein:04,johnson-barron:04,tulino:06,madiman:07}
and~\cite{gavalakis:clt}.
Then in 1991, Barron gave an information-theoretic proof
of the martingale convergence theorem
for nonnegative martingales,
and established a number of 
related convergence results for monotone
sequences of $\sigma$-algebras~\cite{barron:isit91}.
Building on earlier work by R\'{e}nyi~\cite{renyi:61}
and Fritz~\cite{fritz:73}, in 2000 Barron
established an elegant Markov chain convergence
theorem. He used information-theoretic arguments to show that,
for a reversible Markov chain (on a general state space)
with a unique invariant measure,
the relative entropy between the time-$n$ distribution
of the chain and its invariant measure, converges
to zero if it is ever finite~\cite{barron:isit00}.
And in 2007, in joint work with Madiman~\cite{madiman:07},
Barron developed a number of sharp inequalities
for the entropy and Fisher information of sums
of independent, continuous random variables, 
drawing interesting parallels with functional-analytic
and statistical considerations.

It is certainly fair to say that, 
at least between the mid-1980s
and the late 1990s, Andrew Barron was the main 
driving force of the information-theoretic
approach to probabilistic fundamentals.

\medskip

\noindent
{\bf More information in probability. } Over the past 30 years,
many more fascinating connections have been established
between information-theoretic ideas and core probabilistic
results. We only mention some of them very briefly;
more details can be found in the relevant historical
discussion in~\cite{gavalakis-LNM:23},
and in the earlier reviews by
Barron~\cite{barron:97,barron:isit00},
Csisz{\'a}r~\cite{csiszar:97},
and the text~\cite{johnson:book}.

The so-called {\em entropy method},
introduced 
Herbst~\cite{davies-simon:84} and
developed by 
Marton~\cite{marton:96} and
Ledoux~\cite{ledoux:96,ledoux:97,ledoux:book},
has been one of the key approaches to 
proving concentration of measure 
inequalities~\cite{boucheron:book,raginsky:book},
sometimes also in connection with ideas
from optimal transport~\cite{bobkov:99,villani:book}.
Poisson 
approximation~\cite{harremoes:01,konto-H-J:05,HJK:ITW08,konto-H-J:10}
and compound Poisson approximation~\cite{K-madiman:ITW04,Kcompound:10,jKm:13}
have been extensively studied via an
information-theoretic lens.
The profound relationship between information theory
and 
functional-analytic inequalities has a long history,
dating back to the work of Shannon~\cite{shannon:48},
Stam~\cite{stam:59} and Blachman~\cite{blachman:65}
on the entropy power inequality~\cite{dembo-cover-thomas:91}.
These include
re-interpretations of the
Gross' logarithmic 
Sobolev inequality~\cite{gross:75},
the Brascamp-Lieb 
inequality~\cite{carlen:09,courtade:16,courtade:21,anantharam:22},
and connections
with high-dimensional convex 
geometry~\cite{costa-cover:84,bobkov:12,madiman:17}.
The deep connections between the convergence of diffusions,
estimation, relative entropy, and Fisher information,
have led to the development of a rich web of results
known as `Bakry-{\'E}mery theory'~\cite{bakry-emery:85,bakry:book};
see also the relevant work by Brown~\cite{brown:86},
Barron~\cite{barron:97} and Guo et al.~\cite{guo:05}.
Motivated by fascinating developments in additive 
combinatorics and number theory,
a series of entropy bounds have been developed by,
among others, Ruzsa~\cite{Ruz09} and Tao~\cite{tao:10}.
More recent work in this direction
includes~\cite{KM:14,KM:16,fradelizi:arxiv24,%
gavalakis:24,gavalakis-doubling:arxiv}.
Elementary information-theoretic
results were applied to some classical
questions in probabilistic number theory
in~\cite{primes:ITW08,kontoyiannis:primes}.
Finally, {\em free entropy} plays a major role in the
noncommutative probability theory developed by
Voiculescu~\cite{voiculescu:I,voiculescu:II,voiculescu:III,voiculescu:IV}.

\section{de Finetti's representation theorem}
\label{s:classical}

A random vector $X_1^n:=(X_1,\ldots, X_n)$ is {\em exchangeable}
if its distribution is invariant under permutations of the indices
$\{1,\ldots,n\}$. A process $\{X_n\;;\;n\geq 1\}$ is 
{\em exchangeable}
if $X_1^n$ is exchangeable for every $n \geq 1$. 
[Throughout, we use the notation $X_i^j$ to denote the vector 
of random variables $(X_i,\ldots,X_j), i\leq j$,
and the corresponding lower-case notation $x_i^j$ for
individual realisations $(x_i,\ldots,x_j)$ of $X_1^j$.]
De~Finetti's 
celebrated representation theorem, established
in the 1930s, states that a binary process 
is exchangeable if and only if it is a mixture of independent 
and identically distributed (i.i.d.) sequences.

\begin{theorem}[de Finetti~\cite{definetti:31,definetti:37}]
Let $\{X_n\}$ be an exchangeable process where each 
$X_n$ takes values in $\{0,1\}$. Then there is a unique 
Borel probability measure $\mu$ on $[0,1]$ such that, 
for every $n\geq 1$,
\begin{equation} 
\label{eq:def}
    \BBP(X_1^n = x_1^n) = \int_{[0,1]}{Q^n_{p}(x_1^n)\,d\mu(p)} 
	= \int_{[0,1]}{\prod_{i=1}^nQ_{p}(x_i)\,d\mu(p)},
\quad x_1^n\in\{0,1\}^n,
\end{equation}
where $Q_p(1) = 1- Q_p(0) = p$ is the 
probability mass function of the Bernoulli 
distribution with parameter $p$.
\end{theorem}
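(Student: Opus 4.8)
The plan is to reduce the representation to a classical moment problem. By exchangeability, $\BBP(X_1^n = x_1^n)$ depends on $x_1^n$ only through the number of ones $k = \sum_{i=1}^n x_i$, so it suffices to determine, for each $n$ and $k$, the common probability of a string with $k$ ones among the first $n$ coordinates. Accordingly I would set
$$
p_k := \BBP(X_1 = 1, \ldots, X_k = 1), \qquad k \geq 0,
$$
with the convention $p_0 = 1$; again by exchangeability this is well defined and does not depend on which $k$ coordinates are chosen. First I would express every finite-dimensional probability through the $p_k$: placing the ones first and applying inclusion--exclusion over the zero coordinates gives
$$
\BBP(X_1 = 1, \ldots, X_k = 1,\, X_{k+1} = 0, \ldots, X_n = 0) = \sum_{j=0}^{n-k}(-1)^j \binom{n-k}{j}\, p_{k+j}.
$$

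Since the left-hand side is a probability it is nonnegative, and this is precisely the statement that the sequence $(p_k)_{k\ge0}$ is \emph{completely monotone}, i.e. all iterated finite differences satisfy $(-1)^{m}\Delta^m p_k \geq 0$, where $\Delta p_k = p_{k+1}-p_k$. The key step is then the Hausdorff moment theorem: a sequence $(p_k)$ with $p_0 = 1$ is completely monotone if and only if there is a \emph{unique} Borel probability measure $\mu$ on $[0,1]$ with $p_k = \int_{[0,1]} p^k \, d\mu(p)$ for all $k$. I would invoke this to produce $\mu$; for a self-contained argument, existence follows from Helly's selection theorem applied to the Bernstein-polynomial approximants of $\mu$, and uniqueness follows because polynomials are dense in $C[0,1]$, so the moments determine $\mu$.

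Finally I would substitute the moment representation back into the inclusion--exclusion identity. For a string with $k$ ones among the first $n$ coordinates,
$$
\BBP(X_1^n = x_1^n) = \sum_{j=0}^{n-k}(-1)^j \binom{n-k}{j} \int_{[0,1]} p^{k+j}\, d\mu(p) = \int_{[0,1]} p^k (1-p)^{n-k}\, d\mu(p) = \int_{[0,1]} \prod_{i=1}^n Q_p(x_i)\, d\mu(p),
$$
which is exactly \eqref{eq:def}. Uniqueness of $\mu$ is immediate, since its moments $p_k = \BBP(X_1 = \cdots = X_k = 1)$ are determined by the law of the process.

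The main obstacle is the Hausdorff moment theorem itself: verifying complete monotonicity is a one-line consequence of the nonnegativity of probabilities, but converting the moment sequence into an honest measure requires a compactness/approximation argument. I would also flag the more probabilistic route that matches this paper's theme: for $N \geq n$, condition $X_1^n$ on $S_N := \sum_{i=1}^N X_i$; by exchangeability this is sampling $n$ coordinates without replacement from an urn of $S_N$ ones and $N - S_N$ zeros, and as $N \to \infty$ the empirical frequency $S_N/N$ converges to a limit $Z$ whose law is $\mu$, while the finite de Finetti (sampling-without-replacement) bounds discussed below control the passage to the i.i.d.\ $\mathrm{Bern}(Z)$ mixture.
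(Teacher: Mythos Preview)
Your argument via the Hausdorff moment problem is correct and is in fact the classical route to de~Finetti's theorem for binary sequences: exchangeability forces the probabilities to depend only on $k=\sum x_i$, the nonnegativity of the cylinder probabilities is exactly complete monotonicity of $(p_k)$, and Hausdorff's theorem then produces the unique representing measure. The back-substitution via the binomial identity is clean, and your remark on uniqueness is right.

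That said, there is nothing to compare against: the paper does not prove this theorem. It is stated as a classical result attributed to de~Finetti's original papers, and the paper immediately moves on to the Hewitt--Savage extension (also stated without proof) and then to its real subject, finite de~Finetti bounds. So your proposal is a valid self-contained proof of a result the authors simply quote.

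Your closing paragraph is the more pertinent observation for this paper. The ``probabilistic route'' you sketch---conditioning on $S_N$, recognising the conditional law as sampling without replacement, and passing to the limit via the law of $S_N/N$---is exactly the mechanism the paper exploits in Sections~\ref{s:dfTV} and~\ref{samplingsec} (see the representations~(\ref{eq:Pkintegral}) and~(\ref{eq:Mkintegral}) and the discussion around them). Diaconis--Freedman's finite bounds and the martingale/reverse-martingale convergence of the empirical measure are what drive that alternative proof, and it is much closer in spirit to the paper's theme than the moment-problem argument. If you wanted your write-up to mesh with the paper, that second approach is the one to flesh out.
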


De Finetti's theorem holds much more generally.
For a measurable space $(S, \mathcal{S})$ we write $\MS$ 
for the 
space of probability measures on $S$,
and $\clF$ for the smallest 
$\sigma$-algebra that makes the maps 
$\{\pi_A\;;\;A \in \mathcal{S}\}$ measurable,
where each $\pi_A:\clM(S)\to[0,1]$ is 
defined by $P\mapsto \pi_A(P) := P(A)$.
For a probability measure $\mu$ on $(\MS,\clF)$ and $k\geq1$, 
we write $M_{k,\mu}$ for the mixture of i.i.d.\ measures on 
$(S^k,\clS^k)$, defined by:
\begin{equation}
\label{eq:mixture}
    M_{k,\mu}(A):= \int_{\MS}{Q^k(A)\,d\mu(Q)}, \quad A \in \mathcal{S}^k.
\end{equation}

In the case when $S$ is a finite set,
we often identify probability measures $P$ on $S$ 
with the corresponding probability mass functions (p.m.f.s),
so that $P(x)=P(\{x\})$, $x\in S$. Similarly
we identify $\clM(S)$ with
the corresponding simplex in $\RL^c$
consisting of all probability vectors
$(P(x)\;;x\in S)$, and we equip $\clM(S)$
with the Borel $\sigma$-algebra generated
by the open subsets of $\clM(S)$ in the
induced subspace topology.

The most general form of de Finetti's theorem 
is due to Hewitt and Savage:

\begin{theorem} [Hewitt and Savage~\cite{hewitt-savage:55}]
\label{thm:HS}
Let $S$ be a compact Hausdorff space 
equipped with its Baire $\sigma$-algebra $\mathcal{S}$.
If $\{X_n\}$ is an exchangeable process 
with values in $S$, 
then there exists a unique measure $\mu$ on 
the Baire $\sigma$-algebra of $\MS$ such that,
for each $k\geq 1$, the law $P_k$ of $X_1^k$
admits the representation:
    $$
    P_k = M_{k,\mu},
    $$
\end{theorem}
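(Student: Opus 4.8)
The plan is to realise the entire process as a single measure on the product space and then apply Choquet's barycentre theorem to the convex set of exchangeable laws, after identifying its extreme points with the i.i.d.\ measures. First I would set $\Omega:=S^{\NN}$, which is compact Hausdorff by Tychonoff's theorem. The consistent, exchangeable family $\{P_k\}$ defines a positive, unital linear functional on the subalgebra of functions on $\Omega$ depending on finitely many coordinates; since that subalgebra separates points and is dense in $C(\Omega)$ by Stone--Weierstrass, the functional extends to a state, i.e.\ to a Baire probability measure $P_\infty$ on $\Omega$ with marginals $P_k$. By the Riesz representation theorem the Baire probability measures on $\Omega$ form a weak-$*$ compact convex subset of $C(\Omega)^*$, and those invariant under every finite permutation of the coordinates cut out a weak-$*$ closed, hence compact, convex subset $\clE$. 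The i.i.d.\ laws $\{Q^\NN:Q\in\MS\}$ sit inside $\clE$ as a closed subset homeomorphic to the compact space $\MS$, via the continuous injection $Q\mapsto Q^\NN$.

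The crux of the argument, and the step I expect to be the main obstacle, is to show that the extreme points of $\clE$ are exactly the i.i.d.\ product measures. One inclusion is the Hewitt--Savage zero--one law: under a product measure the exchangeable $\sigma$-algebra is $P_\infty$-trivial, so $Q^\NN$ is ergodic and therefore cannot be written as a nontrivial convex combination of exchangeable laws. For the reverse inclusion I would use that an extreme point of $\clE$ is \emph{ergodic} and that an ergodic exchangeable law is necessarily i.i.d. The mechanism is the strong law for exchangeable sequences: for bounded Baire $f$, the symmetric averages $\frac1n\sum_{i=1}^n f(X_i)$ converge, by reverse-martingale convergence on the decreasing family of permutation-invariant $\sigma$-algebras, to the conditional expectation of $f(X_1)$ given the exchangeable $\sigma$-algebra, which collapses to the constant $\int f\,dQ_0$ in the ergodic case. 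Hence the empirical measures $L_n=\frac1n\sum_{i=1}^n\delta_{X_i}$ converge to a deterministic $Q_0\in\MS$, and applying the same argument to averages of products $\prod_j f_j(X_{i_j})$ over distinct indices identifies the law of each finite block as $Q_0^k$. Making these ergodic-theoretic steps work cleanly in the possibly non-metrizable Baire setting is the delicate point; it is handled by testing against the point-separating algebra $C(S)$ and exploiting the regularity of Baire measures on compact Hausdorff spaces.

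With the extreme points identified, and noting that they form a \emph{closed} subset of $\clE$ (since $Q\mapsto Q^\NN$ embeds the compact space $\MS$ homeomorphically), Choquet's theorem --- in the Bishop--de~Leeuw form if $\clE$ is not metrizable --- yields for $P_\infty\in\clE$ a probability measure $\mu$ supported on $\mathrm{ext}(\clE)\cong\MS$ that represents $P_\infty$ as a barycentre. Restricting to the first $k$ coordinates turns this into exactly $P_k=M_{k,\mu}$ in the notation of~\eqref{eq:mixture}.

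Finally, for uniqueness I would argue via moments against continuous test functions rather than appeal to the simplex structure of $\clE$. Any representing $\mu$ must satisfy, for all $k$ and all $f_1,\dots,f_k\in C(S)$,
\begin{equation*}
\int_{\MS}\prod_{i=1}^k\Bigl(\int_S f_i\,dQ\Bigr)\,d\mu(Q)=\int_{S^k}\prod_{i=1}^k f_i(x_i)\,dP_k(x_1^k),
\end{equation*}
so $P_k$ determines all such integrals. The maps $Q\mapsto\int_S f\,dQ$ are weak-$*$ continuous on $\MS$, and their products span a point-separating subalgebra of $C(\MS)$ containing the constants; by Stone--Weierstrass this subalgebra is dense, so the displayed moments pin down $\mu$ uniquely among Baire probability measures on the compact space $\MS$. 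This gives both the existence and the uniqueness of the representing measure, completing the proof.
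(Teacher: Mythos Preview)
The paper does not actually give a proof of Theorem~\ref{thm:HS}; it is stated as a classical result attributed to Hewitt and Savage~\cite{hewitt-savage:55}, with only a one-sentence remark that ``a key idea in the proof of Hewitt and Savage is the geometric interpretation of i.i.d.\ measures as the extreme points of the convex set of exchangeable measures, so that any point in this convex set can be expressed as a mixture of sufficiently many extreme points.'' Your outline is precisely an implementation of this idea via Choquet theory, so in that sense you are in full agreement with what the paper indicates, and your sketch is substantially more detailed than anything the paper provides.

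As a proof outline your proposal is sound and follows the standard route. The identification of the extreme points of the exchangeable laws with the product measures, the use of reverse-martingale convergence to handle the ergodic case, the invocation of the Bishop--de~Leeuw form of Choquet's theorem to cope with non-metrizability, and the Stone--Weierstrass uniqueness argument are all correct in spirit and are the expected ingredients. You rightly flag the non-metrizable Baire-measurability issues as the point requiring care; this is indeed where the technical work lies in the original Hewitt--Savage paper, and a fully rigorous treatment would need to verify that the reverse-martingale and empirical-measure arguments go through at the level of the Baire $\sigma$-algebra rather than relying on a countable generating family.
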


Recall that
the Baire $\sigma$-algebra $\clB$ of a topological space $B$
is the smallest $\sigma$-algebra that makes
all continuous functions $f:B\to\RL$ 
measurable~\cite{dudley:book,cohn:book}.
A key idea in the proof of Hewitt and Savage is 
the geometric interpretation of i.i.d.\ measures 
as the extreme points of the convex set of exchangeable measures,
so that any point in this convex set can be expressed as a mixture 
of sufficiently many extreme points. This idea also plays an important 
role in the {\em finite} de Finetti regime discussed in 
the following sections.   

There is extensive literature extending de Finetti's theorem 
in a number of different directions. Although in this paper we 
focus on {\em finite} de Finetti bounds and their 
connection with sampling bounds, 
we briefly mention some other interesting connections.
One exciting such connection is between de Finetti-style theorems
and what in the probability literature is known as {\em Gibbs'
conditioning principle}~\cite{dembo-zeitouni:book},
also referred to as the
{\em conditional limit theorem}~\cite{cover:book2}.
Diaconis and Freedman~\cite{diaconis-freedman:87}
showed that the first $k$ coordinates of an orthogonally 
invariant random vector in $\mathbb{R}^n$ are approximately 
mixtures of independent Gaussian random variables.
This is very similar, in spirit, both to 
de Finetti's theorem and to the conditional
limit theorem, which states the following:
Suppose $n$ is large and $k$ is fixed, 
and let $\hat{P}_{X_1^n}$ denote the empirical measure
induced by the i.i.d.\ vector $X_1^n\sim P^n$.
Then, conditional on $\hat{P}_{X_1^n}$
belonging to an atypical set $E$ of probability measures, 
the law of $X_1^k$ is approximately 
equal to the i.i.d.\ law $(P^*)^k$, 
where $P^*$ is the relative entropy-closest 
member of $E$ to $P$.  
Exploring this connection further, a finite 
de Finetti theorem was proved in~\cite{diaconis-freedman:88}
using a finite form of the conditional limit theorem.
Interestingly, one of the information-theoretic 
proofs~\cite{gavalakis-LNM:23}
recently developed (see Section~\ref{relativeentropysec}) 
is also based on the proof of the conditional limit theorem.

Another fascinating area is that of exchangeable random graphs, 
which is also connected to the well known array-version of 
de Finetti's theorem, known as the Aldous-Hoover theorem;
see. e.g.,~\cite{diaconis-janson:08} and references therein.
Extensions of de Finetti-style
theorems to mixtures of Markov chains were also established
by Diaconis and Freedman~\cite{diaconis-freedman:80}.
A conjecture on partial exchangeability made in that work 
was recently proven in~\cite{halberstam:24}.

Finally, we mention that there has been renewed interest
in exchangeability in statistics, in part motivated
by the success of conformal prediction and related 
methods~\cite{gammerman:98,saunders:99},
leading among other things to the notion of weighted exchangeability;
see, e.g., the recent work in~\cite{barber:23,tang:23}.

\section{Finite de Finetti bounds in total variation} 
\label{s:dfTV}

De Finetti's theorem offers an explicit characterisation of 
exchangeable processes, which is general, natural, and useful.
Historically, it has also been viewed as a powerful justification
of the subjective probability point of view 
in Bayesian statistics~\cite{diaconis:77,bayarri:04}.
In this context, it is interpreted as stating that,
an exchangeable binary sequence, for example,
can equivalently be viewed as the realisation
of an i.i.d.\ Bernoulli sequence, conditional on the
value of the Bernoulli parameter, which is distributed
according to a unique prior distribution.

In terms of applications, it is natural to ask whether
de Finetti's theorem also holds for finite exchangeable
sequences. The answer is ``yes and no''.
Strictly speaking, the exact representation of the 
distribution of a finite exchangeable vector as a mixture
of product distributions does {\em not} hold in general,
but it does hold approximately.
Consider, e.g., a pair $(X_1,X_2)$ of binary
random variables with,
\begin{align} \label{cexample}
\BBP(X_1=0, X_2 = 1) = \BBP(X_1 = 1, X_2 = 0) = \frac{1}{2}.
\end{align}
The random vector $(X_1,X_2)$ is clearly exchangeable, but if 
a representation like~\eqref{eq:def} were true for some 
probability measure $\mu$ on $[0,1]$, then we would have,
$$
\int_{[0,1]}p^2\,d\mu(p) = \int_{[0,1]} (1-p)^2\,d\mu(p) = 0,
$$
which implies that $\mu(\{0\}) = \mu(\{1\}) = 1$, a contradiction. 

The example~\eqref{cexample} was given by Diaconis~\cite{diaconis:77}. 
In that work, the set of exchangeable binary measures for which a de 
Finetti-style representation fails was interpreted in a geometric way,
and it was observed that the volume of the region representing those measures 
decreases, in the following sense: If $X_1^k$ are the
first $k$ coordinates of a longer exchangeable binary sequence $X_1^n$, 
then for $n$ significantly larger than $k$ the distribution of $X_1^k$
is close to a product distribution. More specifically, 
it was shown that for each $k\leq n$ there exists a 
mixing measure $\mu_n$, depending on $n$ but not on $k$, 
such that the distribution $P_k$ of $X_1^k$
satisfies,
\begin{equation} \label{eq:diaconisfirst}
\tv{P_k}{M_{k,\mu_n} } \leq \frac{C_k}{n},
\end{equation}
where $C_k$ is a constant depending only on $k$. 
Here the total variation distance (TV) between two 
measures $\mu, \nu \in \MS$ is defined as:
$$
\tv{\mu}{\nu} := 2\sup_{A \in \mathcal{S}}{|\mu(A) - \nu(A)|}.
$$
Not coincidentally, as we will see below,
the hypergeometric probabilities appear in the 
geometric proof of~(\ref{eq:diaconisfirst}) as the extreme points of
the convex set of exchangeable measures embedded in $\mathbb{R}^k$.
Results of the form~\eqref{eq:diaconisfirst} are referred to as 
{\em finite de Finetti theorems}, and they typically state that,
if $X_1^n$ is exchangeable and $k$ is small compared to $n$, 
then the distribution of $X_1^k$ is in some sense close
that of of an i.i.d.\ mixture. 

The binary assumption was removed and the the sharpest rates were 
obtained in Diaconis and Freedman~\cite{diaconis-freedman:80b},
for exchangeable random vectors with values in an arbitrary measurable
space:

\begin{theorem}[Diaconis and Freedman~\cite{diaconis-freedman:80b}]
\label{dfinfinite} 
Let $X_1^n$ be an exchangeable random vector with values in
a measurable space 
$(S,\mathcal{S})$.
Then there exists 
a probability measure $\mu_n$ 
on $(\MS,\clF)$ such that, for every $k \leq n$,
the distribution $P_k$ of $X_1^k$ satisfies,
\begin{equation} \label{eq:dfindep}
\tv{P_k}{M_{k,\mu_n}} \leq \frac{k(k-1)}{2n},
\end{equation}
where $M_{k,\mu_n}$ is the mixture of product distributions
in~{\em (\ref{eq:mixture})}.
\end{theorem}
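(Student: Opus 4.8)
The natural route is the classical \emph{sampling with versus without replacement} argument, which also explains why the rate is $k(k-1)/(2n)$. The point is that the mixing measure $\mu_n$ is not built by hand but read off from the data: I would take $\mu_n$ to be the law of the random empirical measure
\[
\widehat{P}_{X_1^n} := \frac{1}{n}\sum_{i=1}^n \delta_{X_i},
\]
regarded as a random element of $(\MS,\clF)$. With this choice $M_{k,\mu_n}(A)=\int_{\MS}Q^k(A)\,d\mu_n(Q)=\ep\bigl[\widehat{P}_{X_1^n}^{\,k}(A)\bigr]$, so that $M_{k,\mu_n}$ is exactly the law of a size-$k$ sample drawn \emph{with} replacement (i.e.\ i.i.d.) from the empirical distribution of the full vector $X_1^n$. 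The whole content of the theorem is then that $P_k$ is the corresponding \emph{without}-replacement sample, and that these two differ by only $O(k^2/n)$.

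To make this precise I would introduce two index vectors on a common space: let $J_1,\ldots,J_k$ be i.i.d.\ uniform on $\{1,\ldots,n\}$, and let $(J_1',\ldots,J_k')$ be a uniformly random injection of $\{1,\ldots,k\}$ into $\{1,\ldots,n\}$, both independent of $X_1^n$. Then $(X_{J_1},\ldots,X_{J_k})$ has law $M_{k,\mu_n}$ (conditionally on $X_1^n$ the $X_{J_i}$ are i.i.d.\ from $\widehat{P}_{X_1^n}$, and one averages), while by exchangeability of $X_1^n$ the tuple $(X_{J_1'},\ldots,X_{J_k'})$ has law $P_k$ (the marginal of any $k$ distinct coordinates equals that of $X_1^k$, and this is preserved under averaging over the injection). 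I would couple the two index vectors so that $J_i'=J_i$ for all $i$ on the event $C^c$ that $J_1,\ldots,J_k$ are pairwise distinct; such a coupling exists because, conditioned on $C^c$, the vector $(J_1,\ldots,J_k)$ is itself a uniformly random injection. On $C^c$ the two samples coincide, so the coupling inequality gives
\[
\tv{P_k}{M_{k,\mu_n}}\le \BBP(C)
= \BBP\Bigl(\bigcup_{1\le i<j\le k}\{J_i=J_j\}\Bigr)
\le \binom{k}{2}\frac{1}{n}=\frac{k(k-1)}{2n},
\]
using a union bound over the $\binom{k}{2}$ pairs, each colliding with probability $1/n$. This also shows the estimate is essentially tight: for i.i.d.\ data from an atomless law the population has $n$ distinct atoms, and the collision event is exactly the obstruction, matching the counterexample flavour of~\eqref{cexample}.

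\noindent
I expect the main obstacle to be measure-theoretic rather than combinatorial, and it is precisely what makes the \emph{arbitrary measurable space} generality nontrivial (as opposed to the compact Hausdorff setting of \Theorem{HS}). One must verify that $x_1^n\mapsto\widehat{P}_{x_1^n}$ is a measurable map into $(\MS,\clF)$, so that $\mu_n$ is a well-defined probability measure there, and that the identity $M_{k,\mu_n}=\ep[\widehat{P}_{X_1^n}^{\,k}]$ holds as measures on $(S^k,\mathcal S^k)$. Both reduce to the evaluation maps $\pi_A$, since $\clF$ is by definition the smallest $\sigma$-algebra making the $\pi_A$ measurable; the measurability of $Q\mapsto Q^k(A)$ for product-measurable $A$ is then settled by a monotone-class argument starting from rectangles. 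Crucially, implementing the coupling at the level of the random \emph{indices} $J_i,J_i'$, rather than at the level of the unordered population (which is awkward to manipulate in a general space), avoids any appeal to regular conditional distributions and keeps the argument valid with no topological hypotheses on $S$. The remaining ingredients — the exchangeability identity for $(X_{J_i'})$ and the union bound — are then routine.
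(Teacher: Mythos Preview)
Your proposal is correct and is precisely the sampling argument the paper outlines: $\mu_n$ is the law of $\hat P_{X_1^n}$, $P_k$ and $M_{k,\mu_n}$ are realised as the without- and with-replacement laws via the representations~\eqref{eq:Pkintegral}--\eqref{eq:Mkintegral}, the birthday collision probability controls their difference, and the measurability of $x_1^n\mapsto\hat P_{x_1^n}$ is exactly what the paper handles in its Appendix lemma. One normalisation caveat: with the paper's convention $\tv{\mu}{\nu}=2\sup_A|\mu(A)-\nu(A)|$, the coupling inequality reads $\tv{P_k}{M_{k,\mu_n}}\le 2\,\BBP(C)\le k(k-1)/n$ rather than $k(k-1)/(2n)$; this factor of two is an inconsistency already present in the paper's transcription (compare Theorem~\ref{dfinfinite} with Theorem~\ref{thm:freedmancn} and~\eqref{eq:exact}), since Diaconis--Freedman stated the bound under the unnormalised convention.
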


Unlike the geometric proof of~(\ref{eq:diaconisfirst}), the proof
of~(\ref{eq:dfindep}) was based on the 
elegant connection of finite de Finetti representations
with bounds on the difference of 
sampling without and with replacement, described next.
We will make use of a similar argument 
in Section~\ref{newboundsec}, 
adapted for relative entropy.

\medskip

\noindent
{\bf Empirical measures and types. }
Let $\hat{P}_{X_1^n}$ denote the {\em empirical measure} 
induced on $(S,\mathcal{S})$ by the 
random vector, namely,
$$
\hat{P}_{X_1^n} := \frac{1}{n}\sum_{i=1}^n\delta_{X_i},
$$
where $\delta_x$ denotes the Dirac measure that places a unit
mass at $x\in S$. 
Similarly, let $\hat{P}_{x_1^n}$ denote the empirical measure 
induced by
a fixed string $x_1^n\in S^n$.
We refer to $\hat{P}_{x_1^n}$ as the
{\em type} of $x_1^n$.
If $Q$ is a p.m.f.\ on a finite set $S_0$,
or a probability measure supported on a finite
subset $S_0\subset S$, we call $Q$ an $n$-{\em type}
if $nQ(x)$ is an integer for each $x\in S_0$.

\medskip

\noindent
{\bf Exchangeability and sampling. }
The key observation here is the following: Let $X_1^n$ be an
exchangeable random vector. Then,
conditional on $\hat{P}_{X_1^n}=Q$,
the distribution of
$X_1^n$ is uniform on the set of sequences 
$x_1^n$ with the same type $Q$. Moreover, for any $k\leq n$,
the distribution $P_k$ of $X_1^k$ is the
distribution of sampling without replacement from an urn 
containing the balls $x_1,\ldots,x_n$, where $x_1^n$ 
is any string with $\hat{P}_{x_1^n}=Q$. Therefore,
letting $\mu_n$ denote the law of $\hat{P}_{X_1^n}$
on $(\clM(S),\clF)$, we have,
\begin{equation}
P_k(A)=\int h(Q,n,k;A)\,d\mu_n(Q),\quad A\in \clS^k,
\label{eq:Pkintegral}
\end{equation}
where $h(Q,n,k;\cdot)$ denotes the 
{\em multivariate hypergeometric}
law of drawing $k$ balls {\em without} 
replacement from
an urn containing the balls $x_1$, $x_2$, \ldots, $x_n$,
where $x_1^n$ is any string in $S^n$ with type $\hat{P}_{x_1^n}=Q$.
Similarly, the mixture $M_{k,\mu_n}$ with
respect to the same mixing measure $\mu_n$ can be
written,
\begin{equation}
M_{k,\mu_n}(A)=\int b(Q,n,k;A)\,d\mu_n(Q),\quad A\in \clS^k,
\label{eq:Mkintegral}
\end{equation}
where $b(Q,n,k;\cdot)=Q^k(\cdot)$ denotes the 
{\em multinomial}
law of drawing $k$ balls {\em with}
replacement from an urn containing 
$x_1$, $x_2$, \ldots, $x_n$, for a string
$x_1^n\in S^n$ with type $\hat{P}_{x_1^n}=Q$.
In view of the expressions~(\ref{eq:Pkintegral})
and~(\ref{eq:Mkintegral}),
comparing $P_k$ with $M_{k,\mu_n}$ reduces
to comparing the hypergeometric and multinomial 
distributions, as described in more detail 
in Section~\ref{samplingsec}. 

Note that, as mentioned in~\cite{diaconis-freedman:80b},
the rigorous justification of the representations~(\ref{eq:Pkintegral})
and~(\ref{eq:Mkintegral}) follows from the measurability
of $\hat{P}_{X_1^n}$ discussed in the Appendix,
and the obvious measurability of the
p.m.f.s $h(Q,n,k;\cdot)$ and $b(Q,n,k;\cdot)$ as
functions of $Q=\hat{P}_{X_1^n}$.

\medskip

The above argument strongly indicates
that the ``natural" mixing measure 
to consider for finite de Finetti theorems 
is the law of 
the empirical measure $\hat{P}_{X_1^n}$ induced by $X_1^n$.

Diaconis and Freedman also showed that the  $O(k^2/n)$ rate
of Theorem~\ref{dfinfinite} can
be improved to $O(k/n)$ when the space 
$S$ is finite:

\begin{theorem}[Diaconis and Freedman~\cite{diaconis-freedman:80b}]
\label{dffinite}
Let $X_1^n$ be an exchangeable random vector with values in a finite $S$
of cardinality $c$.
Then there exists a Borel probability measure $\mu_n$ on 
$\MS$ such that, for every $k\leq n$,
the distribution $P_k$ of $X_1^k$ satisfies,
\begin{equation}
\label{eq:dffinite}
    \tv{P_k}{M_{k,\mu_n}} \leq \frac{2ck}{n},
\end{equation}
where $M_{k,\mu_n}$ is the mixture of product distributions
in~{\em (\ref{eq:mixture})}.
\end{theorem}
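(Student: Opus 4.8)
The plan is to take the mixing measure to be the law $\mu_n$ of the empirical measure $\hat{P}_{X_1^n}$ on $(\clM(S),\clF)$, exactly as the sampling discussion preceding the statement suggests. With this choice, the representations~\eqref{eq:Pkintegral} and~\eqref{eq:Mkintegral} exhibit both $P_k$ and $M_{k,\mu_n}$ as mixtures, against the \emph{same} measure $\mu_n$, of the multivariate hypergeometric laws $h(Q,n,k;\cdot)$ and the multinomial laws $b(Q,n,k;\cdot)=Q^k(\cdot)$, respectively. Because total variation is convex, averaging both of its arguments against a common measure can only decrease it, so
\[
\tv{P_k}{M_{k,\mu_n}} \le \int \tv{h(Q,n,k;\cdot)}{b(Q,n,k;\cdot)}\,d\mu_n(Q) \le \sup_{Q}\, \tv{h(Q,n,k;\cdot)}{b(Q,n,k;\cdot)},
\]
where the supremum runs over the $n$-types $Q$ supported on the $c$-point set $S$. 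Thus the theorem reduces to a single combinatorial sampling estimate, uniform in $Q$: the total variation between drawing $k$ balls with and without replacement from an urn of $n$ balls with composition $Q$ is at most $2ck/n$.

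Next I would reduce this sampling estimate to the level of label counts. Both $h(Q,n,k;\cdot)$ and $b(Q,n,k;\cdot)$ are exchangeable measures on $S^k$: each assigns to a string $x_1^k$ a probability depending only on its type, and each is uniform on the set of strings of any given type. Consequently their total variation on $S^k$ equals the total variation between the induced laws of the count vector $(N_1,\dots,N_c)$, namely between the multivariate hypergeometric law $\mathrm{MHG}(n;n_1,\dots,n_c;k)$, with $n_j=nQ(j)$, and the multinomial law $\mathrm{Mult}(k;Q)$. It then suffices to bound this count-level distance by $2ck/n$.

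The main obstacle is obtaining the \emph{linear} rate $O(ck/n)$ rather than the quadratic $O(k^2/n)$ of Theorem~\ref{dfinfinite}. That weaker bound arises from coupling at the level of ball \emph{indices}: the with- and without-replacement samples can be made to agree unless the with-replacement draws repeat an index, a birthday-type event of probability at most $\binom{k}{2}/n$; this argument ignores the alphabet and is genuinely of order $k^2/n$, and the same loss appears in the naive sequential maximal coupling of the two urn schemes, whose per-step discrepancy accumulates like $\sum_{r<k} r/n$. To exploit finiteness I would instead work directly with the count vectors and peel off the $c$ labels one at a time: writing $\mathrm{MHG}$ and $\mathrm{Mult}$ through their sequential constructions over the labels $1,2,\dots,c$ (hypergeometric, respectively binomial, thinning), one interpolates between the two by switching each label's conditional draw from hypergeometric to binomial in turn. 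A telescoping argument, together with data processing for the coordinates not yet switched, then bounds the total distance by a sum over the $c$ labels of binary hypergeometric-versus-binomial gaps, each of order $k/n$. Carrying out this accounting so that the $c$ contributions combine to the clean linear bound $2ck/n$ — in particular controlling the binary gaps uniformly as the residual population shrinks along the peeling — is the delicate step, and is precisely the content of the sampling comparison developed in Section~\ref{samplingsec}.
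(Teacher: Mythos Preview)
Your proposal is correct and follows essentially the same route as the paper: take $\mu_n$ to be the law of $\hat{P}_{X_1^n}$, use the integral representations~\eqref{eq:Pkintegral} and~\eqref{eq:Mkintegral}, apply convexity of total variation, and reduce to the uniform sampling bound $\tv{h(Q,n,k;\cdot)}{b(Q,n,k;\cdot)}\le 2ck/n$. The paper then simply invokes this sampling bound as Theorem~\ref{dfsampling} (cited from Diaconis--Freedman) without proof, whereas you go further and sketch a label-by-label telescoping argument for it; note, though, that Section~\ref{samplingsec} only \emph{states} Theorem~\ref{dfsampling} and does not carry out such a peeling, so your final clause slightly overstates what that section contains.
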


The rates in both bounds~\eqref{eq:dfindep} 
and~\eqref{eq:dffinite} 
were shown in~\cite{diaconis-freedman:80b} to be tight. We explain 
the tightness of~\eqref{eq:dfindep} at the end of
Section~\ref{samplingsec} below, as it follows from the tight 
approximation of sampling without and with replacement. 

\section{Sampling bounds in total variation} 
\label{samplingsec}

Consider an urn containing $n$ balls, each ball having one 
of $c\geq 2$ different colours, and suppose we draw out $k\leq n$ of them. 
Beyond the motivation offered above in connection
with de Finetti-style representations, 
comparing the distributions of sampling with and without replacement 
is a fundamental problem with a long 
history in probability and statistics;
see, e.g.,~\cite{rao:66, thompson:12}.
Intuitively, 
if the number $n$ of balls is large compared to the number $k$ 
of draws, there should only be a negligible difference in the results
of sampling with and without replacement.  
Write $\ellp = (\ell_1,\ell_2, \ldots, \ell_c)$ for the vector 
representing the number of balls of each colour in the urn, 
so that there are $\ell_j$ balls of colour $j$, $1\leq j\leq c$ 
and $\ell_1+\ell_2+\cdots+\ell_c = n$. 
Let $\spro=(s_1,s_2,\ldots,s_c)$ denote the vector 
of the numbers of balls of each colour drawn, 
so that $s_1+s_2+\cdots+s_c = k$.

When sampling without replacement, the probability that the
colours of the $k$ balls drawn are given by $\spro$ 
is given by the multivariate hypergeometric 
p.m.f.,
\begin{equation}
H(\ellp,n,k;\spro) 
:= \frac{ \prod_{j=1}^c \binom{\ell_i}{s_j}}{\binom{n}{k}}
\label{eq:MVhyp}
\end{equation}
for all $\spro$ with
$0\leq s_j\leq\ell_j$ for all $j$, and
$s_1+\cdots+s_c=k$. 
On the other hand, the corresponding p.m.f.\ $B(\ellp,n, k; \spro)$
of sampling with replacement, is the multinomial,
\begin{equation} 
\label{eq:multin}
B(\ellp, n, k; \spro) 
:= \binom{k}{s_1,\ldots,s_c} \prod_{j=1}^c \left( \frac{\ell_j}{n} 
\right)^{s_j},
\end{equation}
for all $\spro$ with $s_j\geq 0$ and
$s_1+\cdots+s_c=k$, 
where $\binom{k}{s_1,\ldots,s_c} = \frac{k!}{\prod_{j=1}^cs_j!}$ 
is the multinomial coefficient.

Note that the p.m.f.s $H$ and $B$ 
in~(\ref{eq:MVhyp}) and~(\ref{eq:multin}) involve only 
the numbers of balls of each colour that are drawn,
whereas the corresponding distributions $h$ and $b$
defined in the previous section
are over the entire sequence of colours drawn
from the urn. 
Of course the two are simply related:
Suppose the composition of the urn is described
by the vector $\ellp$ or,
equivalently, by the $n$-type
$Q^{(\ellps)}$ defined by
$Q^{(\ellps)}(j):=\ell_j/n$, for each colour
$j=1,2,\ldots,c$.
For the sake of simplicity (and without
loss of generality), take the set of colours $S$
to be $S=\{1,2,\ldots,c\}$.
Then, by the definitions of $H,B,h$ and $b$,
for any $x_1^k\in S^k$,
we have,
\begin{align} 
h(Q^{(\ellps)},n,k;x_1^k)
&= 
	\binom{k}{s_1,\ldots,s_c}^{-1}
	H(\ellp,n,k;\spro) 
	\label{eq:relabelH}\\
b(Q^{(\ellps)},n,k;x_1^k) 
&= 
	\binom{k}{s_1,\ldots,s_c}^{-1}
	B(\ellp,n,k;\spro),
\label{eq:relabelB}
\end{align}
where $\spro$ is the composition of $x_1^k$,
i.e., each $s_j$ is the number of occurrences
of $j$ in $x_1^k$, $1\leq j\leq c$.

Diaconis and Freedman established the following bound 
between $h$ and $b$, and used it to prove
Theorem~\ref{dffinite} using the
connection between exchangeability and sampling explained in the 
previous section.

\begin{theorem} 
[Diaconis and Freedman~\cite{diaconis-freedman:80b}]
\label{dfsampling} 
Let $h(Q,n,k;\cdot)$ and $b(Q,n,k;\cdot)$ denote 
p.m.f.s of sampling $k$ balls without and with replacement,
respectively, from an urn containing $n$ balls of $c$ different 
colours, where the $n$-type $Q$ describes the composition
of the balls in the urn. Then:
$$
\tv{h(Q,n,k;\cdot)}{b(Q,n,k;\cdot)} \leq \frac{2ck}{n}.
$$
\end{theorem}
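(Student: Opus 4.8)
The plan is to first remove the sequence structure. By the relabelling identities \eqref{eq:relabelH}--\eqref{eq:relabelB}, for every $x_1^k\in S^k$ with composition $\spro$ we have $h(Q^{(\ellps)},n,k;x_1^k)=\binom{k}{s_1,\ldots,s_c}^{-1}H(\ellp,n,k;\spro)$ and $b(Q^{(\ellps)},n,k;x_1^k)=\binom{k}{s_1,\ldots,s_c}^{-1}B(\ellp,n,k;\spro)$, i.e.\ both $h$ and $b$ equal the corresponding count probability times the \emph{same} multinomial factor, which depends on $x_1^k$ only through $\spro$. Since there are exactly $\binom{k}{s_1,\ldots,s_c}$ sequences of each fixed composition, summing $|h-b|$ over all of $S^k$ and grouping by composition makes the common factor cancel, so that $\tv{h(Q,n,k;\cdot)}{b(Q,n,k;\cdot)}=\sum_{\spro}\bigl|H(\ellp,n,k;\spro)-B(\ellp,n,k;\spro)\bigr|$. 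In other words $\spro$ is a sufficient statistic, the sequence-level distance equals the $L^1$ distance between $H$ and $B$, and it suffices to prove that the standard total variation distance between the multivariate hypergeometric $H$ of \eqref{eq:MVhyp} and the multinomial $B$ of \eqref{eq:multin} is at most $ck/n$.

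\textbf{Why the naive coupling is not enough.} Drawing $k$ labelled balls with replacement and reusing them without replacement shows that $H$ is the conditional law of $B$ given that the $k$ drawn balls are distinct, which bounds the distance by the repeat probability $\binom{k}{2}/n$. This is exactly the mechanism behind the $O(k^2/n)$ rate of Theorem~\ref{dfinfinite}, but it overshoots by a factor of order $k/c$: already for a balanced two-colour urn the true distance is $\Theta(k/n)$. To see where the extra factor $k$ can be traded for $c$, I would pass to the likelihood ratio, which after cancelling factorials reads
\[
\frac{H(\ellp,n,k;\spro)}{B(\ellp,n,k;\spro)}=\frac{\prod_{j=1}^{c}\prod_{i=0}^{s_j-1}\bigl(1-\tfrac{i}{\ell_j}\bigr)}{\prod_{i=0}^{k-1}\bigl(1-\tfrac{i}{n}\bigr)}.
\]
Taking logarithms and using $\log(1-x)\approx-x$ gives $\log(H/B)\approx \binom{k}{2}/n-T$, where $T:=\sum_{j}s_j(s_j-1)/(2\ell_j)$. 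The decisive point is that $\ep_B[T]=\sum_j k(k-1)(\ell_j/n)^2/(2\ell_j)=\binom{k}{2}/n$, so the two $O(k^2/n)$ contributions cancel \emph{in the mean}: the distance is not governed by the size of these terms but by the fluctuations of $T$ about its mean.

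\textbf{The fluctuation estimate, and the main obstacle.} Writing $s_j=kp_j+\xi_j$ with $p_j=\ell_j/n$ and $\sum_j\xi_j=0$, the part of $T$ linear in the $\xi_j$ has coefficient $kp_j/\ell_j=k/n$, \emph{independent of $j$}, so it equals $(k/n)\sum_j\xi_j=0$ by the multinomial constraint. Hence $T-\ep_B[T]$ is a purely quadratic form $\approx\sum_j \xi_j^2/(2\ell_j)$, and a variance computation using the multinomial moments $\var(\xi_j)=kp_j(1-p_j)$, $\cov(\xi_i,\xi_j)=-kp_ip_j$ gives $\var_B(T)=O(ck^2/n^2)$ (the diagonal terms contribute $k^2/(2n^2)$ per colour). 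Since $d_{TV}(H,B)\le \tfrac12\ep_B|e^{\log(H/B)}-1|\approx\tfrac12\ep_B|T-\ep_B T|\le\tfrac12\sqrt{\var_B(T)}$, this yields a bound of order $\sqrt{c}\,k/n$, comfortably inside the claimed $ck/n$. The hard part is making this rigorous uniformly in $\spro$: the Taylor expansion of $\log(H/B)$ degenerates when some $s_j$ approaches $\ell_j$ (the factor $1-(s_j-1)/\ell_j\to0$), so one must truncate to a bulk set of typical compositions and control the atypical ones separately, and one must bound the cubic remainder, which is $O(k^3/n^2)$ on the bulk. The alternative route to the explicit constant $2ck/n$ is the Diaconis--Freedman coupling, which refines the distinct-balls coupling by resolving each repeat colour-by-colour; the genuine difficulty there is that a greedy same-colour resolution \emph{biases} the law (a small three-ball example shows the resulting count vector is not hypergeometric), so the bookkeeping must be arranged to preserve the exact hypergeometric marginal while still charging each colour only $O(k/n)$.
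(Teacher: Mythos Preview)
The paper does not prove this theorem; it is merely stated and attributed to Diaconis and Freedman~\cite{diaconis-freedman:80b}, with no argument supplied, so there is no paper proof to compare against.

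Judged on its own, your write-up is not a proof but a discussion of two possible strategies, each explicitly left unfinished. The reduction to colour counts via \eqref{eq:relabelH}--\eqref{eq:relabelB} is correct, and the heuristic $\log(H/B)\approx\binom{k}{2}/n-T$ with $\ep_B[T]=\binom{k}{2}/n$ correctly isolates the cancellation that explains why the rate should drop from $k^2/n$ to something of order $ck/n$. But you then say ``the hard part is making this rigorous uniformly in $\spro$'' and list the obstacles (degeneracy as $s_j\to\ell_j$, cubic remainder, atypical compositions) without resolving any of them; the coupling route is only named, together with the warning that the greedy version is biased. One small slip in the heuristic: the linear-in-$\xi_j$ part of $T$ does not fully vanish --- differentiating $s_j(s_j-1)/(2\ell_j)$ at $s_j=kp_j$ gives $k/n-1/(2\ell_j)$, and the second piece does not cancel under $\sum_j\xi_j=0$, so it too must be controlled. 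In short, you have correctly diagnosed the mechanism but not established the bound.
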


However, if one wants a bound that is independent of the number of colours, 
then one has to pay a factor $k$ in the bound:

\begin{theorem} 
[Freedman~\cite{freedman:77}]
\label{thm:freedmancn}
In the notation of Theorem~\ref{dfsampling},
suppose $k$ balls are drawn from an urn containing
$n$ balls of $n$ different colours, so that $c=n$ and
$Q=Q_U$ with $Q_U(j)=1/n$ for each $j=1,2,\ldots,n$.
Then:
\begin{equation} \label{eq:freedman}
    2\bigl(1 - e^{-\frac{k(k-1)}{2n}}\bigr) 
\leq\tv{h(Q_U,n,k;\cdot)}{b(Q_U,n,k;\cdot)} \leq \frac{k(k-1)}{n}.
\end{equation}
\end{theorem}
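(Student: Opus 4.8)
The plan is to exploit the special structure of this single-occupancy urn, in which every colour appears exactly once ($\ell_j=1$ for all $j$), and to reduce everything to the classical birthday-problem probability. First I would write the two laws explicitly. Sampling without replacement can never repeat a colour, so $h(Q_U,n,k;\cdot)$ is the uniform law on the set $\clD\subset S^k$ of length-$k$ sequences with pairwise-distinct entries, assigning mass $1/[n(n-1)\cdots(n-k+1)]$ to each such sequence and zero elsewhere; sampling with replacement is $b(Q_U,n,k;\cdot)=Q_U^k$, the uniform law on all of $S^k$, with mass $1/n^k$ on every sequence.

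The key observation, and the crux of the whole argument, is that these two laws are ordered so that the total variation distance collapses to a single quantity. On $\clD$ we have $h\geq b$, since $n(n-1)\cdots(n-k+1)\leq n^k$, while off $\clD$ we have $h=0<b$. Writing the TV distance (with the paper's normalisation) as $\sum_x |h(x)-b(x)|$ and splitting the sum over $\clD$ and its complement, each of the two pieces equals $1-b(\clD)$, so that
\begin{equation*}
\tv{h(Q_U,n,k;\cdot)}{b(Q_U,n,k;\cdot)}=2\bigl(1-b(\clD)\bigr),
\end{equation*}
where $b(\clD)=\prod_{i=1}^{k-1}(1-i/n)$ is precisely the probability that $k$ independent uniform draws from $\{1,\dots,n\}$ are pairwise distinct. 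Thus proving the theorem reduces to sandwiching this product between the two claimed expressions.

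Both inequalities then follow from one-line estimates. For the lower bound on the TV distance I would apply $1-x\leq e^{-x}$ termwise, giving $b(\clD)\leq\exp(-\tfrac1n\sum_{i=1}^{k-1}i)=\exp(-k(k-1)/(2n))$, which rearranges to $2(1-b(\clD))\geq 2(1-e^{-k(k-1)/(2n)})$. For the upper bound I would use the Weierstrass product inequality $\prod_i(1-a_i)\geq 1-\sum_i a_i$ with $a_i=i/n$, valid here because each $i/n\leq(k-1)/n\leq 1$ as $k\leq n$; this yields $b(\clD)\geq 1-k(k-1)/(2n)$ and hence $2(1-b(\clD))\leq k(k-1)/n$.

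I do not anticipate a serious obstacle: once the two distributions are written down, the only genuine idea is recognising that $h$ is supported on, and dominates $b$ on, the all-distinct set $\clD$, so that the TV distance is exactly twice the collision probability $1-b(\clD)$. After that, the two bounds are just the standard upper and lower controls of the birthday product. The single point needing a little care is checking the hypothesis $i/n\leq 1$ required for the Weierstrass inequality, which is exactly where the assumption $k\leq n$ is used.
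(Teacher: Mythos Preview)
Your proposal is correct and matches the paper's approach essentially exactly: the paper also reduces to the birthday set $B=\clD^c$, notes that $h$ vanishes there, and obtains the exact formula $\tfrac{1}{2}\tv{h}{b}=1-\frac{n!}{(n-k)!\,n^k}$, which is your $1-b(\clD)$. Your write-up is in fact more detailed than the paper's sketch, supplying the standard bounds $1-x\leq e^{-x}$ and the Weierstrass product inequality that the paper leaves implicit.
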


The proof of~\eqref{eq:freedman} is based on considering
the set,
\begin{equation} \label{birthdayset}
    B = \{x_1^k : x_i=x_j \text{ for some } 1\leq i<j\leq k\},
\end{equation}
and noting that $h(Q_U,n,k;x_1^k) = 0$ for all $x_1^k\in B$,
which implies,
\begin{equation}
\frac{1}{2}\tv{h(Q_U,n,k;\cdot)}{b(Q_U,n,k;\cdot)} 
= 1 - \frac{n!}{(n-k)!n^k}.
\label{eq:exact}
\end{equation}

Getting back to de Finetti's theorem, Diaconis 
and Freedman~\cite[Proposition~31]{diaconis-freedman:80b} 
show the inequality,
\begin{equation*}
    \tv{h(Q_U,n,k;\cdot)}{M_{k,\mu}(\cdot)} 
	\geq \tv{h(Q_U,n,k;\cdot)}{b(Q_U,n,k;\cdot)}
\end{equation*}
for any mixing measure $\mu$. Therefore, 
since $h(Q_U,n,k;\cdot)$ is the distribution of $X_1^k$
when $X_1^n$ is the (exchangeable) vector obtained by 
a random permutation of $S=\{1,\ldots,n\}$,
the sharpness of~\eqref{eq:dfindep} 
follows from the 
sharpness of the upper bound in~\eqref{eq:freedman}.

\section{Sampling bounds in relative entropy}  

For $\mu, \nu \in \MS$, the relative entropy 
between $\mu$ and $\nu$ is defined as,
\begin{equation*}
    D(\mu\|\nu) := \int_{S}{\frac{d\mu}{d\nu}\log{\frac{d\mu}{d\nu}}\,d\nu},
	 \quad \text{if } \mu \ll \nu,
\end{equation*}
and $D(\mu\|\nu) = \infty$ otherwise, where $\frac{d\mu}{d\nu}$ stands 
for the Radon-Nikod{\'y}m derivative of $\mu$ with respect to $\nu$,
and where $\log$ denotes the natural logarithm throughout.
In particular, if $S$ is discrete and $P,P'$ are the p.m.f.s
corresponding to $\mu,\nu$, then,
$$D(\mu\|\nu)=D(P\|P')=\sum_{x\in S:P(x)>0}P(x)\log\frac{P(x)}{P'(x)}.$$
In view of Pinsker's inequality~\cite{csiszar:67, kullback:67},
\begin{equation*} 
    \tv{\mu}{\nu} \leq \big[2D(\mu\|\nu)\big]^{1/2},
\end{equation*}
relative entropy is considered 
a stronger notion of ``distance" than total variation.
It is $0$ if and only if $\mu = \nu$,
and it is locally quadratic around $\mu=\nu$~\cite{pardo:03}. 
Moreover, although not a proper metric, relative entropy is 
often thought of 
as a notion of distance between the two measures $\mu$ and $\nu$, 
justified in part by important results in probability and 
statistics~\cite{csiszar-shields:04}.

The difference between sampling with and without replacement 
has also been studied in terms of relative entropy: 

\begin{theorem}[Stam~\cite{stam:78}]
\label{th:stam}
Let $H(\ellp,n,k;\cdot)$ and $B(\ellp,n,k;\cdot)$ denote 
p.m.f.s of sampling without and with replacement 
from an urn with balls of $c$ colours,
as in~{\em (\ref{eq:MVhyp})} and~{\em (\ref{eq:multin})}.
Then, for any $\ellp$ and any $k\leq n$:
\begin{equation} 
\label{eq:stam}
D\big(H(\ellp,n,k;\cdot)\|B(\ellp,n,k;\cdot)\big) 
\leq \frac{(c-1)k(k-1)}{2(n-1)(n-k+1)}.
\end{equation}
\end{theorem}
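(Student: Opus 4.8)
The plan is to prove~(\ref{eq:stam}) by a chain-rule decomposition over the successive draws, after first lifting the problem from unordered compositions to ordered samples. By~(\ref{eq:relabelH}) and~(\ref{eq:relabelB}), the sequence-level laws $h(Q,n,k;\cdot)$ and $b(Q,n,k;\cdot)$ of Section~\ref{samplingsec} assign the \emph{same} factor $\binom{k}{s_1,\ldots,s_c}^{-1}$ to every ordered string $x_1^k$ of a given composition $\spro$, so their log-ratio depends only on $\spro$ and equals $\log(H/B)$. Since exactly $\binom{k}{s_1,\ldots,s_c}$ strings share each composition, summing $h\log(h/b)$ over strings collapses to $\sum_{\spro}H\log(H/B)$, giving $D(h\|b)=D\big(H(\ellp,n,k;\cdot)\|B(\ellp,n,k;\cdot)\big)$. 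I may therefore work with the ordered draws $Y_1,\ldots,Y_k$ made without replacement, whose joint law is $h$, while $b=Q^k$ is i.i.d.\ with $Q(j)=\ell_j/n$ (assuming without loss of generality that every colour is present, i.e.\ $\ell_j\ge1$).

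First I would apply the chain rule for relative entropy. As $b$ is i.i.d., its $(r+1)$-st conditional is simply $Q$, so
$$ D(h\|b)=\sum_{r=0}^{k-1}\ep\big[D(P^{(r)}\|Q)\big], $$
where $P^{(r)}(j)=(\ell_j-M_j)/(n-r)$ is the law of the next draw given the first $r$, the count $M_j$ of colour-$j$ balls already removed is marginally hypergeometric with parameters $(n,\ell_j,r)$ under $h$, and the expectation is over $(M_1,\ldots,M_c)$.

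Next I would relax each conditional term to a $\chi^2$-divergence, using $D(P\|Q)\le\sum_j(P(j)-Q(j))^2/Q(j)$, which follows from $\log t\le t-1$ applied to $t=P(j)/Q(j)$. A short computation gives $P^{(r)}(j)-Q(j)=(r\ell_j-nM_j)/(n(n-r))=-(M_j-\ep M_j)/(n-r)$, so the expected $\chi^2$ term is controlled by the hypergeometric variances $\var(M_j)=r(\ell_j/n)(1-\ell_j/n)(n-r)/(n-1)$. Substituting and using $\sum_j(n-\ell_j)=n(c-1)$ makes the $(n-r)/(n-1)$ correction factor cancel against one power of $(n-r)$, collapsing everything to
$$ \ep\big[D(P^{(r)}\|Q)\big]\le\frac{(c-1)\,r}{(n-1)(n-r)}. $$
Finally, summing over $r=0,\ldots,k-1$ and bounding $1/(n-r)\le 1/(n-k+1)$ for each such $r$ gives $\sum_{r=0}^{k-1}r/(n-r)\le k(k-1)/\big(2(n-k+1)\big)$, which yields exactly~(\ref{eq:stam}).

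The algebra for $P^{(r)}(j)-Q(j)$ and the hypergeometric variance is routine; the point requiring care is that the $\chi^2$ relaxation must be made \emph{per step}, inside the chain rule, rather than directly on $H$ versus $B$. A single $\chi^2$ bound on the high-dimensional composition laws does not obviously produce the correct $k(k-1)$ scaling, so the reduction to ordered samples followed by the one-step decomposition is essential. The genuine crux is recognising that the finite-population factor $(n-r)/(n-1)$ in $\var(M_j)$ is precisely what is needed to telescope against the $(n-r)$ in the denominator of $P^{(r)}-Q$; everything else is bookkeeping.
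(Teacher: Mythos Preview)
Your argument is correct. The paper itself does not supply a proof of Theorem~\ref{th:stam}; it is quoted directly from Stam~\cite{stam:78} and used as a black box (notably in the proof of Theorem~\ref{newtheorem}). Your approach---lifting to the ordered-sample laws via~(\ref{eq:relabelH})--(\ref{eq:relabelB}), applying the chain rule to decompose $D(h\|b)$ into one-step conditional divergences, relaxing each to a $\chi^2$-divergence via $\log t\le t-1$, and then exploiting the hypergeometric variance formula so that the finite-population factor $(n-r)/(n-1)$ cancels one power of $(n-r)$---is essentially Stam's original argument, and every step checks out. The final crude bound $1/(n-r)\le 1/(n-k+1)$ is exactly what produces the stated denominator; sharper handling of $\sum_{r=0}^{k-1} r/(n-r)$ is what leads to the Harremo\"{e}s--Mat\'{u}\v{s} refinement~(\ref{eq:harmat}), so your remark that this last step is the only loose one is also accurate.
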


Based on~(\ref{eq:relabelH}) and~(\ref{eq:relabelB}),
Stam~\cite[Theorem~2.2]{stam:78} observed that,
\begin{equation} \label{stamequivalence}
    D\bigl(h(Q^{(\ellps)},n,k;\cdot)\|b(Q^{(\ellps)},n,k;\cdot)\bigr) 
	= D\big(H(\ellp,n,k;\cdot)\|B(\ellp,n,k;\cdot)\big),
\end{equation}
so that we also have:
\begin{equation} \label{eq:stamlast}
    D\bigl(h(Q^{(\ellps)},n,k;\cdot)\|b(Q^{(\ellps)},n,k;\cdot)\bigr) 
\leq \frac{(c-1)k(k-1)}{2(n-1)(n-k+1)}.
\end{equation}
Moreover, Stam established a closely matching lower bound,
showing that the $O(k^2/n^2)$ upper bound in Theorem~\ref{th:stam}
is of optimal order in terms of its dependence on $k$ and $n$,
but in general it can be improved:
Harremo\"{e}s and Mat\'{u}\v{s}~\cite[Theorem~4.5]{harremoes:20}
showed that:
\begin{equation}
\label{eq:harmat}
 D\big(H(\ellp,n,k;\cdot)\|B(\ellp,n,k;\cdot)\big) 
\leq (c-1) \left(  \log \left( \frac{n-1}{n-k} \right) 
- \frac{k}{n} + \frac{1}{n-k+1} \right).
\end{equation}

In the special case $c=n$ as in Theorem~\ref{thm:freedmancn},
an exact expression is derived in~\cite[eq.~(29)]{gavalakis-olly:arxiv},
which is interesting to compare with~(\ref{eq:exact}) above:
$$D\big(H(\ellp,n,k;\cdot)\|B(\ellp,n,k;\cdot)\big) 
=\log\Big(\frac{n^k(n-k)!}{n!}\Big).$$

Note that all the bounds
in~\eqref{eq:stam},~(\ref{eq:stamlast}) and~\eqref{eq:harmat} hold 
uniformly in $\ellp$. Sharper bounds can be obtained
if we allow dependence on $\ellp$. Indeed, a bound which
is often sharper was recently given 
in~\cite[Theorem~1.1]{gavalakis-olly:arxiv}:
For any $\ellp$ and all $1\leq k \leq n/2$:
\begin{align}
D\big(H(\ellp,n,k;\cdot)\|B(\ellp,n,k;\cdot)\big) & \leq 
\frac{c-1}{2} \Big( \log \Big( \frac{n}{n-k} \Big)
- \frac{k}{n-1} \Big) 
	\nonumber\\
&   +  \frac{k(2n+1)}{12n(n-1)(n-k)} \sum_{i=j}^c \frac{n}{\ell_j}
+ \frac{1}{360} \Big( \frac{1}{(n-k)^3}  - \frac{1}{n^3} \Big) 
\sum_{j=1}^c \frac{n^3}{\ell_j^3}.\qquad 
 \label{eq:olly}
\end{align}
See~\cite{gavalakis-olly:arxiv} for some
detailed comparisons 
between~\eqref{eq:stam},~(\ref{eq:stamlast}),~\eqref{eq:harmat}
and~(\ref{eq:olly}).

Finally, we emphasise that all the bounds in this section depend 
(in fact, linearly) on the number of colours $c$.

\section{Finite de Finetti bounds in relative entropy} 
\label{relativeentropysec}

A number of finite de Finetti bounds in relative entropy have
recently been established  
in~\cite{gavalakis:21,gavalakis-LNM:23,gavalakis-berta:24,%
gavalakis-olly:arxiv,yu:24}. 
Let $X_1^n$ be an exchangeable random vector with 
values in some space $(S,\clS)$, let $P_k$ denote
the law of $X_1^k$ for $k\leq n$, and 
for any probability measure $\mu$ on
$(\clM(S),\clF)$, recall
the definition of the mixture of product distributions
$M_{k,\mu}$ in~(\ref{eq:mixture}).

In~\cite{gavalakis:21}, it was shown that,
if $S=\{0,1\}$, then 
there is
a mixing measure $\mu_n$ such that, for $k\leq n$,
\begin{equation} \label{gkfirsteq}
D(P_{k}\|M_{k,\mu_n}) \leq \frac{5k^2\log n}{n-k}.
\end{equation}
Then in~\cite{gavalakis-LNM:23} it was shown that,
if $S$ is a finite set, then
there is a mixing measure $\mu_n$ such that
a weaker bound of the following form holds
for all $k$ sufficiently smaller than $n$:
\begin{equation} \label{gksecondeq}
D(P_{k}\| M_{k,\mu_n})=
O\left(\Big(\frac{k}{\sqrt{n}}\Big)^{1/2}\log{\frac{n}{k}}\right).
\end{equation}
The proofs of both of these results are information-theoretic,
and in both cases the mixing measure $\mu_n$ is the law of the
empirical measure $\hat{P}_{X_1^n}$.
The bound~\eqref{gkfirsteq} was proved via conditional entropy 
estimates, while the proof of~(\ref{gksecondeq}) explored the 
connection between exchangeability and the Gibbs conditioning principle.

In the more general case when $S$ is an arbitrary 
discrete (finite or countably infinite)
set, it was shown in~\cite{gavalakis-berta:24} 
that (a different) mixing measure
$\mu^*_n$ exists, such that the following sharper bound holds
for all $k<n$:
\begin{equation} \label{gkbeq}
D(P_{k}\|M_{k,\mu^*_n}) \leq \frac{k(k-1)}{2(n-k-1)}H(X_1).
\end{equation}
Note that this meaningful as long as $H(X_1)$ is finite, and
that it gives potentially much sharper estimates when
$H(X_1)$ is small.
De Finetti-type bounds for random variables with values 
in abstract spaces $(S,\clS)$ were also derived in~\cite{gavalakis-berta:24}.
The proof of~\eqref{gkbeq} was based on an argument that originated
in the quantum information theory literature. 

The derivations of~\eqref{gkfirsteq}--\eqref{gkbeq} 
employed purely information-theoretic
ideas and techniques, but the actual bounds
are of sub-optimal rate. A sharp rate 
was more recently obtained in~\cite{gavalakis-olly:arxiv},
using Stam's sampling bound 
in Theorem~\ref{th:stam} combined with the convexity
of relative entropy:

\begin{theorem} [Johnson, Gavalakis and 
	Kontoyiannis~\cite{gavalakis-olly:arxiv}]
\label{thm:withOlly}
Let $X_1^n$ be an exchangeable random vector with values 
in a finite set $S$ of cardinality $c$.
Then there is a Borel probability measure $\mu_n$ on $\clM(S)$
such that, for every $k \leq n$, the distribution $P_k$
of $X_1^k$ satisfies:
    \begin{equation} \label{eq:ollydf}
        D(P_k\|M_{k,\mu_n}) \leq \frac{(c-1)k(k-1)}{2(n-1)(n-k+1)}.
    \end{equation}
\end{theorem}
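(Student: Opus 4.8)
The plan is to combine the sampling-to-exchangeability reduction described in Section~\ref{samplingsec} with Stam's relative entropy bound (Theorem~\ref{th:stam}), using the convexity of relative entropy to transfer the per-urn estimate to the mixture. First I would choose the mixing measure $\mu_n$ to be the law of the empirical measure $\hat{P}_{X_1^n}$ on $(\clM(S),\clF)$, which the discussion preceding~(\ref{eq:Pkintegral}) identifies as the natural candidate. With this choice, the representations~(\ref{eq:Pkintegral}) and~(\ref{eq:Mkintegral}) express both $P_k$ and $M_{k,\mu_n}$ as mixtures, with respect to the \emph{same} mixing measure $\mu_n$, of the hypergeometric law $h(Q,n,k;\cdot)$ and the multinomial law $b(Q,n,k;\cdot)$ respectively.

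The key step is then to invoke convexity of the relative entropy in its pair of arguments. Since $P_k$ and $M_{k,\mu_n}$ are $\mu_n$-averages of $h(Q,n,k;\cdot)$ and $b(Q,n,k;\cdot)$, the joint convexity of $D(\cdot\|\cdot)$ gives
\begin{equation*}
D(P_k\|M_{k,\mu_n}) \leq \int_{\clM(S)} D\bigl(h(Q,n,k;\cdot)\,\|\,b(Q,n,k;\cdot)\bigr)\,d\mu_n(Q).
\end{equation*}
Now for each $n$-type $Q$ appearing in the support of $\mu_n$, the identity~(\ref{stamequivalence}) equates the relative entropy between the sequence-level laws $h$ and $b$ with the relative entropy between the count-level hypergeometric and multinomial p.m.f.s $H$ and $B$, to which Stam's bound~(\ref{eq:stam}) applies. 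Crucially, the right-hand side of~(\ref{eq:stam}) is $\frac{(c-1)k(k-1)}{2(n-1)(n-k+1)}$, which depends only on $c,k,n$ and is \emph{uniform} in the urn composition $\ellp$. Hence the integrand is bounded by this same constant for every $Q$, and integrating against the probability measure $\mu_n$ leaves the constant unchanged, yielding exactly~(\ref{eq:ollydf}).

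The main subtlety I expect to manage is measurability and the rigorous justification of the mixture representations, rather than any hard inequality. One must verify that $\hat{P}_{X_1^n}$ is a genuine measurable map into $(\clM(S),\clF)$ and that $Q\mapsto h(Q,n,k;\cdot)$ and $Q\mapsto b(Q,n,k;\cdot)$ are measurable, so that the integrals in~(\ref{eq:Pkintegral}) and~(\ref{eq:Mkintegral}) are well defined and the convexity inequality for the integral form of relative entropy may be applied; these measurability points are exactly the ones flagged after~(\ref{eq:Mkintegral}) and treated via the Appendix. A second minor point is that $\mu_n$ is supported on $n$-types, so Stam's bound is always applicable with the same $c$ equal to the cardinality of $S$; since every urn of $n$ balls drawn from an alphabet of size $c$ has at most $c$ colours, the uniform constant is valid for every $Q$ in the support, and the proof goes through without any loss depending on $k$ relative to $n$ beyond what already appears in Stam's estimate.
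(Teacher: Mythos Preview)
Your proposal is correct and matches the paper's approach exactly: the paper explicitly states that Theorem~\ref{thm:withOlly} is proved ``using Stam's sampling bound in Theorem~\ref{th:stam} combined with the convexity of relative entropy,'' and the proof of Theorem~\ref{newtheorem} in Section~\ref{newboundsec} carries out precisely the argument you describe (choose $\mu_n$ as the law of $\hat{P}_{X_1^n}$, use the representations~(\ref{eq:Pkintegral}) and~(\ref{eq:Mkintegral}), apply joint convexity, and then Stam's uniform bound). The only difference between that proof and the one for Theorem~\ref{thm:withOlly} is that here $c=|S|$ is fixed, whereas in Theorem~\ref{newtheorem} one takes $c\leq n$; you have handled this distinction correctly.
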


Theorem~\ref{thm:withOlly} gives
a bound with rate $O(k^2/n^2)$.
In view of Pinsker's inequality and the lower bound
in total variation obtained by Diaconis 
and Freedman~\cite{diaconis-freedman:80b},
this bound is of optimal order in its dependence 
on $k$ and $n$.

Even more recently,
Song, Attiah and Yu~\cite{yu:24} used an adaptation 
of Freedman's argument~\cite{freedman:77}, 
again based on considering the set~\eqref{birthdayset}, 
to establish a finite de Finetti theorem
with a relative entropy bound that is weaker
when $S$ is finite,
but which is independent 
of the alphabet size:

\begin{theorem} [Song, Attiah and Yu~\cite{yu:24}]
\label{thm:song}
Let $X_1^n$ be an exchangeable random vector with values
in a discrete (finite or countably infinite) set $S$.
Then there exists a probability measure $\mu_n$ on $(\MS,\clF)$, such that, 
for all $k<n$, the distribution $P_k$ of $X_1^k$ satisfies,
    \begin{equation} \label{yu:eq}
D(P_k\|M_{k,\mu_n}) \leq 
\log{\Bigl(\frac{n^k(n-k!)}{n!}\Bigr)} 
\leq -\log{\Bigl(1-\frac{k(k-1)}{2n}}\Bigr),
\end{equation}
where the second inequality holds as long as $k(k-1)<2n$.
\end{theorem}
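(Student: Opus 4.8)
The plan is to follow the same route as the proof of Theorem~\ref{thm:withOlly}---reduce the finite de Finetti bound to a comparison of sampling with and without replacement---but to replace Stam's alphabet-dependent estimate of Theorem~\ref{th:stam} by a data-processing argument that produces an alphabet-free constant. First I would take the mixing measure $\mu_n$ to be the law of the empirical measure $\hat{P}_{X_1^n}$, so that the representations~(\ref{eq:Pkintegral}) and~(\ref{eq:Mkintegral}) apply:
\[
P_k(\cdot)=\int h(Q,n,k;\cdot)\,d\mu_n(Q),\qquad
M_{k,\mu_n}(\cdot)=\int b(Q,n,k;\cdot)\,d\mu_n(Q).
\]
By the joint convexity of relative entropy this gives
\[
D(P_k\|M_{k,\mu_n})\le\int D\bigl(h(Q,n,k;\cdot)\,\big\|\,b(Q,n,k;\cdot)\bigr)\,d\mu_n(Q),
\]
so it suffices to bound the integrand by a constant that is uniform over all types $Q$ and, crucially, independent of the alphabet.

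The key step---the adaptation of Freedman's argument---is to lift each sampling problem from colour sequences to ordered ball positions. Label the $n$ balls in the urn by $\{1,\dots,n\}$, let $\widetilde{b}$ be the uniform law on $\{1,\dots,n\}^k$ (drawing positions with replacement) and let $\widetilde{h}$ be the uniform law on the $n!/(n-k)!$ injective position sequences (drawing without replacement); these depend only on $n$ and $k$, not on $Q$ or on $|S|$. Since the deterministic colour-reading map pushes $\widetilde{h}$ forward to $h(Q,n,k;\cdot)$ and $\widetilde{b}$ forward to $b(Q,n,k;\cdot)$, the data-processing inequality yields $D(h\|b)\le D(\widetilde{h}\|\widetilde{b})$. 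Here the set~(\ref{birthdayset}) reappears in position space as the non-injective sequences, on which $\widetilde{h}$ vanishes; on the injective sequences the log-likelihood ratio is the constant $\log\bigl((n-k)!\,n^k/n!\bigr)$, so
\[
D(\widetilde{h}\|\widetilde{b})=\log\Bigl(\frac{n^k(n-k)!}{n!}\Bigr),
\]
independently of $Q$. Integrating this constant against $\mu_n$ and combining with the convexity bound gives the first inequality in~(\ref{yu:eq}).

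For the second inequality I would write
\[
\log\Bigl(\frac{n^k(n-k)!}{n!}\Bigr)=-\sum_{j=0}^{k-1}\log\Bigl(1-\frac{j}{n}\Bigr)=-\log\prod_{j=0}^{k-1}\Bigl(1-\frac{j}{n}\Bigr),
\]
and apply the Weierstrass product inequality $\prod_{j=0}^{k-1}(1-j/n)\ge 1-\sum_{j=0}^{k-1}(j/n)=1-k(k-1)/(2n)$, which is valid with positive right-hand side precisely when $k(k-1)<2n$; taking $-\log$ of both sides then gives the claimed bound. I do not expect the computations to be the difficulty: the conceptual crux is recognising the positional lift, after which everything is explicit and, decisively, free of any dependence on $c=|S|$. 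The main technical obstacle I anticipate is measure-theoretic rather than combinatorial---justifying that $\mu_n$ (the law of $\hat{P}_{X_1^n}$) is well defined and that the representations~(\ref{eq:Pkintegral})--(\ref{eq:Mkintegral}) remain valid when $S$ is countably infinite. This should be manageable because each realisation of $\hat{P}_{X_1^n}$ is supported on at most $n$ points, so every fibre is a genuinely finite urn problem and the uniform per-type bound above survives the passage to an infinite alphabet unchanged.
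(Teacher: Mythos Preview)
Your argument is correct. It differs from the route the paper takes for this bound (given there as the proof of Theorem~\ref{newtheorem2}, the abstract-space generalisation of Theorem~\ref{thm:song}). You follow the convexity template of Theorem~\ref{newtheorem}: reduce to a per-type comparison $D\bigl(h(Q,n,k;\cdot)\|b(Q,n,k;\cdot)\bigr)$, and then---instead of invoking Stam---bound this uniformly by lifting to position space and applying the data-processing inequality, which yields the exact constant $\log\bigl(n^k(n-k)!/n!\bigr)$ independently of $Q$. The paper instead avoids both convexity and data-processing: it introduces the random indices $W_1,\ldots,W_k$ directly at the level of the mixed measures, derives the pointwise ratio bound $M_{k,\mu_n}(A)\ge \frac{n!}{(n-k)!\,n^k}\,P_k(A)$ for every measurable $A$, and concludes via the representation of relative entropy as a supremum over finite partitions. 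Your approach is more modular and information-theoretic in flavour; the paper's pointwise-ratio argument has the advantage that it transfers verbatim to arbitrary measurable spaces $(S,\clS)$ without any separate handling of the discrete case, whereas your route, as you note, leans on the per-type urn picture and so needs the observation that each fibre involves at most $n$ points. Both arguments ultimately encode Freedman's birthday-set idea, just at different stages of the computation.
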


In the same work, the bound of Theorem~\ref{thm:song}
was shown to be tight.
Recall the notation
$h(Q,n,k;\cdot)$ for the law of sampling without
replacement as in Section~\ref{samplingsec}.
Recall also the exact expression in~(\ref{eq:exact}) above.

\begin{theorem} [Song, Attiah and Yu~\cite{yu:24}]
\label{yuconverse}
Let $h(Q_U,n,k;\cdot)$ denote the law of sampling
$k$ balls without replacement from an urn
containing $n$ balls of $n$ different
colours, where $Q_U(j)=1/n$ for $j\in S=\{1,\ldots,n\}$.
Then, for any mixing measure $\mu$ on $\clM(S)$
and any $k<n$:
    \begin{equation}
        D\big(h(Q_U,n,k;\cdot)\|M_{k,\mu}(\cdot)\big) 
	\geq \log{\Bigl(\frac{n^k(n-k)!}{n!}\Bigr)}.
    \end{equation}
\end{theorem}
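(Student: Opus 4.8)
The plan is to use the explicit form of $h(Q_U,n,k;\cdot)$ and reduce the claimed bound to a sharp estimate on the ``all-distinct'' probability of an arbitrary i.i.d.\ mixture. Since $Q_U$ places mass $1/n$ on each of the $n$ colours, drawing $k$ balls \emph{without} replacement produces a uniformly random injective string, so $h(Q_U,n,k;\cdot)$ is the uniform law on the set $J\subset S^k$ of length-$k$ strings with distinct entries: $h(Q_U,n,k;x_1^k)=p_0:=(n-k)!/n!$ for each $x_1^k\in J$, and $|J|=n!/(n-k)!$. If $M_{k,\mu}$ fails to dominate $h$ on $J$ the relative entropy is $+\infty$ and there is nothing to prove, so I may assume $M_{k,\mu}(x_1^k)>0$ on $J$. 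Writing the relative entropy out and using that $\log h\equiv\log p_0$ on $J$,
\begin{equation*}
D\big(h(Q_U,n,k;\cdot)\,\|\,M_{k,\mu}\big)=\log p_0-\mathbb{E}_h\big[\log M_{k,\mu}(X_1^k)\big],
\end{equation*}
where $\mathbb{E}_h$ is expectation under $h(Q_U,n,k;\cdot)$. Hence it suffices to prove $\mathbb{E}_h[\log M_{k,\mu}(X_1^k)]\le -k\log n$; adding $\log p_0$ then yields exactly $\log\big(n^k(n-k)!/n!\big)$.

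First I would apply Jensen's inequality (concavity of $\log$) to get $\mathbb{E}_h[\log M_{k,\mu}(X_1^k)]\le \log\mathbb{E}_h[M_{k,\mu}(X_1^k)]$, and then compute the inner expectation. Since $h$ is uniform on $J$,
\begin{equation*}
\mathbb{E}_h[M_{k,\mu}(X_1^k)]=p_0\sum_{x_1^k\in J}M_{k,\mu}(x_1^k)=p_0\int_{\clM(S)}\Big(\sum_{x_1^k\in J}\prod_{i=1}^k Q(x_i)\Big)\,d\mu(Q).
\end{equation*}
The key combinatorial step is that the inner sum over injective strings equals $k!\,e_k\big(Q(1),\dots,Q(n)\big)$, where $e_k$ is the $k$-th elementary symmetric polynomial in the colour probabilities (each unordered $k$-subset $T$ contributes $\prod_{x\in T}Q(x)$ in $k!$ orderings). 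Because $\sum_{x}Q(x)=1$, Maclaurin's (equivalently Newton's) inequality gives $e_k(Q)\le \binom{n}{k}n^{-k}$ pointwise in $Q$, with equality precisely at $Q=Q_U$. Integrating this over $\mu$ and using $k!\binom{n}{k}=n!/(n-k)!=1/p_0$ yields $\mathbb{E}_h[M_{k,\mu}(X_1^k)]\le p_0\cdot p_0^{-1}n^{-k}=n^{-k}$, hence $\mathbb{E}_h[\log M_{k,\mu}(X_1^k)]\le -k\log n$, which is exactly what was needed. Note this is equivalent to the clean statement that the collision-free mass $M_{k,\mu}(J)=\int k!\,e_k(Q)\,d\mu(Q)$ is maximised among all i.i.d.\ mixtures by the uniform product $b(Q_U,n,k;\cdot)=Q_U^{\otimes k}$, so equality holds at $\mu=\delta_{Q_U}$, matching the exact expression for $D\big(h(Q_U,n,k;\cdot)\|b(Q_U,n,k;\cdot)\big)$ recorded earlier and confirming tightness.

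The main obstacle is the Maclaurin step, and more precisely recognising that it is the crux. A naive treatment stalls: bounding $M_{k,\mu}(J)\le 1$, or running the chain rule coordinate-by-coordinate and estimating each conditional cross-entropy by AM--GM, only recovers the vacuous $D\ge 0$ (or at best the far weaker $\log\big(n!/(n-k)!\big)$). What forces the bound up to the sharp value $k\log n$ is strictly the \emph{product} structure of $M_{k,\mu}$: it caps the probability that $k$ draws are all distinct at $n!/((n-k)!\,n^k)$, the uniform ``birthday'' value, whereas an unconstrained measure could concentrate all its mass on $J$. Making this symmetric-polynomial inequality the pivot, rather than any coordinatewise argument, is the one genuinely nontrivial move; everything else is bookkeeping.
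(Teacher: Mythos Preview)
Your argument is correct. Note, though, that the paper does not supply its own proof of this statement---Theorem~\ref{yuconverse} is quoted from~\cite{yu:24} as a cited result---so there is no in-paper proof to compare against. Your route is nonetheless the natural one and aligns with the ``birthday set'' reasoning the paper invokes around~\eqref{birthdayset} and~\eqref{eq:exact}: since $h(Q_U,n,k;\cdot)$ is supported on the collision-free set $J$, your Jensen step is equivalent to data processing on the partition $\{J,J^c\}$ and yields $D(h\|M_{k,\mu})\ge -\log M_{k,\mu}(J)$; Maclaurin's inequality $e_k(Q)\le\binom{n}{k}n^{-k}$ for probability vectors $Q$ is then precisely the assertion that the all-distinct mass $Q^k(J)=k!\,e_k(Q)$ is maximised at $Q=Q_U$, giving $M_{k,\mu}(J)\le n!/\big((n-k)!\,n^k\big)$. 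The equality case $\mu=\delta_{Q_U}$ recovers the exact value of $D\big(h(Q_U,n,k;\cdot)\|b(Q_U,n,k;\cdot)\big)$ displayed in the paper after~\eqref{stamequivalence}, confirming tightness as you observe.
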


Letting as before $X_1^n$ denote the exchangeable random 
vector obtained as a random permutation
of the set $S=\{1,\ldots,n\}$, and noting that
$h(Q_U,n,k;\cdot)$ is then the same
as the distribution $P_k$ of $X_1^k$, Theorem~\ref{yuconverse}
shows that the bound~(\ref{yu:eq}) is indeed tight.

It is interesting to note that the 
finite de Finetti bound in~(\ref{yu:eq})
is used in~\cite{yu:24} in the proof of a strong achievability 
result for a certain source coding scenario, where an encoder 
transmits a $k$-letter information sequence to a randomly 
activated subset of $k$ out of $n$ possible users. 

In Section~\ref{newboundsec} we give two new finite
de Finetti bounds in relative entropy, that
are essentially tight, for exchangeable random
vectors with values in arbitrary measurable spaces.
Theorem~\ref{newtheorem} is proved by combining
Stam's sampling bound in Theorem~\ref{th:stam}
with the representations of $P_k$ and $M_{k,\mu_n}$
in terms of sampling distributions in~(\ref{eq:Pkintegral})
and~(\ref{eq:Mkintegral}).
The proof of Theorem~\ref{newtheorem2} is 
a generalization of the proof of Theorem~\ref{thm:song}
in~\cite{yu:24},
combined with the classical representation of relative entropy
as a supremum over finite partitions.

\section{New finite de Finetti bounds on abstract spaces} 
\label{newboundsec}

Recall from Section~\ref{s:classical}
the definition of the $\sigma$-algebra $\clF$
associated with the space $\clM(S)$ of probability measures
on an arbitrary measurable space $(S,\clS)$,
and the definition of the mixture of i.i.d.\ measures $M_{n,\mu}$
in~(\ref{eq:mixture}).

\begin{theorem} \label{newtheorem}
Let $X_1^n$ be an exchangeable random vector with
values in a measurable space $(S,\clS)$.
Then 
there is a probability measure $\mu_n$ on 
$(\clM(S),\clF)$ such that, for each $1\leq k\leq n$,
the distribution $P_k$ of $X_1^k$ satisfies:
$$
D(P_k\|M_{k,\mu_n}) \leq \frac{k(k-1)}{2(n-k+1)}.
$$
\end{theorem}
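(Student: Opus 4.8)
The plan is to take the mixing measure $\mu_n$ to be the law of the empirical measure $\hat{P}_{X_1^n}$ on $(\clM(S),\clF)$, as suggested by the discussion following~(\ref{eq:Mkintegral}), and then to reduce the de Finetti bound to Stam's sampling estimate via the convexity of relative entropy. With this choice, the representations~(\ref{eq:Pkintegral}) and~(\ref{eq:Mkintegral}) express both $P_k$ and $M_{k,\mu_n}$ as mixtures, against the \emph{same} measure $\mu_n$, of the hypergeometric law $h(Q,n,k;\cdot)$ and the multinomial law $b(Q,n,k;\cdot)$. Since $(\mu,\nu)\mapsto D(\mu\|\nu)$ is jointly convex, mixing against a common measure can only decrease relative entropy, so I would first obtain
$$
D(P_k\|M_{k,\mu_n}) \leq \int_{\clM(S)} D\bigl(h(Q,n,k;\cdot)\,\big\|\,b(Q,n,k;\cdot)\bigr)\,d\mu_n(Q).
$$

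The crucial observation comes next: for $\mu_n$-almost every $Q$, the realisation $Q=\hat{P}_{x_1^n}$ is induced by only $n$ observations, hence is an $n$-type supported on \emph{at most $n$ points}. Thinking of $Q$ as describing an urn whose $n$ balls carry $c(Q)\leq n$ distinct colours, I would apply Stam's bound~(\ref{eq:stamlast}) pointwise and then use $c(Q)-1\leq n-1$ to estimate each integrand uniformly in $Q$:
$$
D\bigl(h(Q,n,k;\cdot)\,\big\|\,b(Q,n,k;\cdot)\bigr) \leq \frac{(c(Q)-1)\,k(k-1)}{2(n-1)(n-k+1)} \leq \frac{k(k-1)}{2(n-k+1)}.
$$
Integrating this bound against $\mu_n$ then yields the assertion of the theorem.

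The hard part is conceptual rather than computational, and lies entirely in the single inequality $c(Q)\leq n$: it is precisely this bound on the \emph{effective} number of colours that cancels the factor $(c-1)/(n-1)$ in Stam's estimate and thereby removes any dependence on the cardinality or dimension of the ambient space $S$. Everything else is routine given the earlier results. The only genuinely technical point is measure-theoretic, namely the measurability of $\hat{P}_{X_1^n}$ as a map into $(\clM(S),\clF)$ and the attendant rigorous justification of the integral representations~(\ref{eq:Pkintegral}) and~(\ref{eq:Mkintegral}) on the abstract space; as noted in Section~\ref{s:dfTV}, these are established in the Appendix and pose no essential difficulty.
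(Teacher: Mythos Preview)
Your proposal is correct and follows essentially the same route as the paper: take $\mu_n$ to be the law of $\hat{P}_{X_1^n}$, use the integral representations~(\ref{eq:Pkintegral}) and~(\ref{eq:Mkintegral}) together with the joint convexity of relative entropy, and then apply Stam's bound~(\ref{eq:stamlast}) pointwise with the observation that any $n$-type has at most $n$ colours, so $c\leq n$. The paper's proof is identical in structure and content.
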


\begin{theorem}
\label{newtheorem2}
Under the same assumptions
as Theorem~\ref{newtheorem},
there is a probability measure $\mu_n$ on 
$(\clM(S),\clF)$ such that, for each $1\leq k\leq n$,
$$
D(P_k\|M_{k,\mu_n}) \leq 
\log{\Bigl(\frac{n^k(n-k!)}{n!}\Bigr)} 
\leq -\log{\Bigl(1-\frac{k(k-1)}{2n}}\Bigr),$$
where the second inequality holds as long as $k(k-1)<2n$.
\end{theorem}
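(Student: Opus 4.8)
The plan is to take the mixing measure $\mu_n$ to be the law of the empirical measure $\hat{P}_{X_1^n}=\frac1n\sum_{i=1}^n\delta_{X_i}$, exactly the measure appearing in the representations~\eqref{eq:Pkintegral} and~\eqref{eq:Mkintegral}, and to show that $P_k$ is dominated by $M_{k,\mu_n}$ with a uniform constant. Concretely, writing $C:=\frac{n^k(n-k)!}{n!}$, I would establish the measure inequality
$$
P_k(A)\ \leq\ C\,M_{k,\mu_n}(A),\qquad A\in\clS^k,
$$
which is the abstract analogue of the pointwise likelihood-ratio bound underlying the discrete argument of Song, Attiah and Yu in Theorem~\ref{thm:song}. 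Once this domination is in place the entropy estimate is immediate, and it is in converting it to a statement about $D$ on an abstract space that the representation of relative entropy as a supremum over finite partitions enters.

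The heart of the matter is a combinatorial ``collision'' estimate, which I would phrase directly through exchangeability rather than via urns. Fix $A\in\clS^k$. Since $\mu_n$ is the law of $\hat{P}_{X_1^n}$, expanding the $k$-fold product measure gives
$$
M_{k,\mu_n}(A)=\BBE\bigl[(\hat{P}_{X_1^n})^{k}(A)\bigr]
=\frac{1}{n^k}\,\BBE\Bigl[\sum_{j_1,\ldots,j_k=1}^{n}\II\{(X_{j_1},\ldots,X_{j_k})\in A\}\Bigr],
$$
where the sum ranges over \emph{all} index tuples. On the other hand, for any injective tuple $(i_1,\ldots,i_k)$ there is a permutation of $\{1,\ldots,n\}$ carrying $(1,\ldots,k)$ to $(i_1,\ldots,i_k)$, so exchangeability yields $(X_{i_1},\ldots,X_{i_k})\equald(X_1,\ldots,X_k)$; averaging over the $n!/(n-k)!$ injective tuples gives
$$
P_k(A)=\frac{(n-k)!}{n!}\,\BBE\Bigl[\sum_{(i_1,\ldots,i_k)\ \text{distinct}}\II\{(X_{i_1},\ldots,X_{i_k})\in A\}\Bigr].
$$
Because the injective tuples form a sub-collection of all index tuples and the indicators are nonnegative, the sum over distinct tuples is bounded above by the sum over all tuples, so $\frac{n!}{(n-k)!}P_k(A)\le n^k M_{k,\mu_n}(A)$, which is the claimed domination with constant $C$. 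This is precisely the birthday/collision phenomenon behind~\eqref{birthdayset}: drawing the coordinates without replacement charges only injective index tuples, whereas the i.i.d.\ mixture also charges the ``diagonal'' tuples, and the ratio of the two counts is exactly $C$.

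To turn the domination into the entropy bound on an abstract space I would invoke the variational formula
$$
D(P_k\|M_{k,\mu_n})=\sup_{\mathcal{A}}\ \sum_{A\in\mathcal{A}}P_k(A)\log\frac{P_k(A)}{M_{k,\mu_n}(A)},
$$
the supremum being over all finite measurable partitions $\mathcal{A}$ of $S^k$. For any such partition the domination gives $P_k(A)/M_{k,\mu_n}(A)\le C$ on every atom, so each partition sum is at most $\sum_{A\in\mathcal{A}}P_k(A)\log C=\log C$; taking the supremum yields $D(P_k\|M_{k,\mu_n})\le\log C=\log\bigl(n^k(n-k)!/n!\bigr)$. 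The second inequality is elementary: since $\frac{n!}{n^k(n-k)!}=\prod_{i=0}^{k-1}\bigl(1-\tfrac{i}{n}\bigr)\ge 1-\sum_{i=0}^{k-1}\tfrac{i}{n}=1-\tfrac{k(k-1)}{2n}$, inverting and taking logarithms gives $\log C\le-\log\bigl(1-\tfrac{k(k-1)}{2n}\bigr)$ whenever $k(k-1)<2n$.

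The routine parts are the counting and the final elementary inequality; the point needing genuine care is the measure-theoretic bookkeeping on the abstract space. The main obstacle is to justify the representations~\eqref{eq:Pkintegral}--\eqref{eq:Mkintegral} and the expansion of $M_{k,\mu_n}$ rigorously when $S$ is an arbitrary measurable space --- in particular the measurability of $\hat{P}_{X_1^n}$ as a $\clM(S)$-valued random element and of the map $Q\mapsto Q^{k}(A)$ --- and then to use the partition formula precisely so as to avoid having to construct Radon--Nikod\'ym derivatives directly on $S^k$. Working through finite partitions sidesteps exactly this difficulty, since it reduces everything to finitely many ratios $P_k(A)/M_{k,\mu_n}(A)$ to which the clean collision estimate applies verbatim, so that no structure of $S$ beyond the index combinatorics is ever used --- which is what makes the bound independent of the underlying space.
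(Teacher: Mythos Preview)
Your proposal is correct and follows essentially the same route as the paper's proof: both establish the domination $P_k(A)\le\frac{n^k(n-k)!}{n!}\,M_{k,\mu_n}(A)$ by expanding $M_{k,\mu_n}(A)=\frac{1}{n^k}\sum_{j_1,\ldots,j_k}\BBP\bigl((X_{j_1},\ldots,X_{j_k})\in A\bigr)$, discarding the non-injective index tuples, and applying exchangeability, and then both conclude via the representation of relative entropy as a supremum over finite partitions. The only cosmetic difference is that the paper introduces auxiliary i.i.d.\ uniform indices $W_1,\ldots,W_k$ to write the expansion, whereas you expand the product of the empirical measure directly; the computations are identical.
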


\noindent
{\bf Remarks. }
Before giving the proofs of Theorems~\ref{newtheorem}
and~\ref{newtheorem2}, some remarks 
are in order.
\begin{enumerate}
\item
In both theorems, the mixing measure
$\mu_n$ is the law of the empirical measure $\hat{P}_{X_1^n}$
on $\clM(S)$.
\item
Also both theorems give bounds with the same asymptotic behaviour
$\sim k^2/2n$.
The bound in 
Theorem~\ref{newtheorem2} is slightly stronger 
than that in Theorem~\ref{newtheorem},
and in fact,
in view of the lower bound in Theorem~\ref{yuconverse},
it is tight. On the other hand, the proof of Theorem~\ref{newtheorem}
is very short and quite satisfying in that
it is based on a very direct connection with Stam's~\cite{stam:59} sampling
bound~(\ref{eq:stamlast}).
\item 
As discussed by Diaconis and Freedman
in~\cite{diaconis-freedman:80b}, we note that it is
curious that we have finite de Finetti bounds, both in 
relative entropy and in total variation, under no assumptions
whatsoever on the underlying space $(S,\clS)$, whereas for the seemingly
weaker infinite-dimensional representation of the classical
de Finetti theorem in Theorem~\ref{thm:HS} more structure is required.
\item
The choice of the $\sigma$-algebra $\mathcal{F}$ 
on $\clM(S)$ is essential for the 
measurability of the empirical measure $\hat{P}_{X_1^n}$,
which is needed to define $\mu_n$ for both theorems.
Indeed, for stronger 
$\sigma$-algebras, the measurability of Dirac measures may fail.
For example, if $S$ is a Polish space and we equip $\mathcal{M}(S)$ 
with the Borel $\sigma$-algebra with respect to the $\tau$-topology, 
the Dirac measures are not measurable any more, and therefore 
neither is the empirical measure $\hat{P}_{X_1^n}$;
see, e.g., the relevant discussion
in~\cite[Section~6.2]{dembo-zeitouni:book}.
\item
As also mentioned in~\cite{gavalakis-berta:24}, the dependence 
of de Finetti-relative entropy upper bounds on the alphabet size is 
of interest in applications. For example, it is related to the running time 
of approximation schemes for the minimisation of polynomials
of fixed degree over the simplex~\cite{deklerk:06,deklerk:15}.
\end{enumerate}

\begin{proof}[Proof of Theorem~\ref{newtheorem}]
As it turns out, we have already done almost all of the
work for this. Using the definition
of $M_{k,\mu_n}$, the sampling representations
in~(\ref{eq:Pkintegral}) and~(\ref{eq:Mkintegral}),
and Stam's bound~(\ref{eq:stamlast}),
\begin{align*}
D(P_k\|M_{k,\mu_n})
&=
	D\left(\int h(Q,n,k;\cdot)\,d\mu_n(Q)
	\right\|\left.
	\int b(Q,n,k;\cdot)\,d\mu_n(Q)\right)\\
&\leq
	\int D\big(h(Q,n,k;\cdot)\| b(Q,n,k;\cdot)\big)\,d\mu_n(Q)\\
&\leq
	\frac{k(k-1)}{2(n-k+1)},
\end{align*}
where the first inequality follows from the joint convexity
of relative entropy in its two arguments~\cite{pinsker:book},
and the second inequality follows from the fact that, for any
specific $n$-type $Q$, the urn with composition described 
by $Q$ contains balls of at most $n$ different colours,
so we can take $c\leq n$ in~(\ref{eq:stamlast}).
\end{proof}

\begin{proof}[Proof of Theorem~\ref{newtheorem2}]
We begin by generalising a lower bound 
in~\cite{yu:24} 
based on an argument by Freedman~\cite{freedman:77}.
Take $W_1,W_2,\ldots,W_k$ to
be i.i.d.\ and uniformly distributed in $\{1,2,\ldots,n\}$,
independent of $X_1^n$.
For any $A\in\clS$ and any $n$-type $Q\in\clM(S)$,
from the definitions
it is easy to see that,
$$
\BBP\big((X_{W_1},X_{W_2},\ldots,X_{W_k})\in A
\big|\hat{P}_{X_1^n}=Q\big)=Q^k(A).$$
Therefore, the mixture measure $M_{k,\mu_n}$ can be written,
\begin{align*}
M_{k,\mu_n}(A)
&=
	\int 
	\BBP\big((X_{W_1},X_{W_2},\ldots,X_{W_k})\in A
	\big|\hat{P}_{X_1^n}=Q\big)\,d\mu_n(Q)\\
&=
	\BBP\big((X_{W_1},X_{W_2},\ldots,X_{W_k})\in A\big)\\
&=
	\sum_{w_1^k\in\{1,\ldots,n\}^k}
	\frac{1}{n^k}\,
	\BBP\big((X_{w_1},X_{w_2},\ldots,X_{w_k})\in A\big).
\end{align*}
Summing instead over all the index vectors $w_1^k$ 
in the subset $D_k$ of $\{1,\ldots,n\}^k$ 
that consists of all those $w_1^k$ that have $k$ distinct
elements, and using the exchangeability of $X_1^n$,
\begin{align*}
M_{k,\mu_n}(A)
&\geq
	\sum_{w_1^k\in D_k}
	\frac{1}{n^k}\,
	\BBP\big((X_{w_1},X_{w_2},\ldots,X_{w_k})\in A\big)\\
&=
	\sum_{w_1^k\in D_k}
	\frac{1}{n^k}\,
	\BBP(X_1^k\in A\big)\\
&=
	\frac{n!}{(n-k)!n^k}\,P_k(A).
\end{align*}

Now let $\clA=\{A_1,\ldots,A_N\}$ be any finite partition of $S$.
Applying the last bound above to each $A_i$ and summing over
$i=1,\ldots,N$,
$$\sum_{i=1}^NP_k(A_i)\log\frac{P_k(A_i)}{M_{k,\mu_n}(A_i)}
\leq \log\Big(\frac{n^k(n-k)!}{n!}\Big).
$$
Taking the supremum of this over all finite partitions $\clA$ and
recalling~\cite{pinsker:book} that 
the relative entropy between any two probability 
measures is equal to the supremum over 
all finite partitions $\clA$, completes the proof.
\end{proof}

\appendix
\section{Appendix }

Let $X_1^n$ be an arbitrary (measurable) random vector. The measurability
of the empirical measure $\hat{P}_{X_1^n}$ mentioned in 
Section~\ref{s:dfTV} immediately follows from the following lemma.

\begin{lemma} \label{lemma:dirac}
The Dirac measures $\delta_s$ are measurable maps from $(S,\mathcal{S})$ to $(\mathcal{M}(S),\mathcal{F}),$ where 
$\mathcal{F}$ is the $\sigma$-algebra generated by the maps $\{\pi_A :\mathcal{M}(S) \to [0,1]\}_{A \in \mathcal{S}}$
given by $\pi_A(P) = P(A)$.
\end{lemma}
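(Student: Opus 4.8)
The plan is to invoke the standard criterion for measurability into a $\sigma$-algebra that is generated by a family of maps, and then reduce everything to a one-line computation. Since $\mathcal{F}$ is by definition the smallest $\sigma$-algebra making each $\pi_A$, $A\in\mathcal{S}$, measurable, it is generated by the sets $\pi_A^{-1}(B)$ with $A\in\mathcal{S}$ and $B$ ranging over the Borel subsets $\mathcal{B}([0,1])$ of $[0,1]$. To show that the map $\delta_\bullet:s\mapsto\delta_s$ is $(\mathcal{S},\mathcal{F})$-measurable, I would argue that it suffices to verify that the preimage under $\delta_\bullet$ of each such generator lies in $\mathcal{S}$: the collection of sets $F\in\mathcal{F}$ with $\delta_\bullet^{-1}(F)\in\mathcal{S}$ is itself a $\sigma$-algebra, because preimages commute with complementation and countable unions, so once this collection contains the generators of $\mathcal{F}$ it contains all of $\mathcal{F}$.

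First I would reduce the verification to a statement about a single real-valued function. For a fixed $A\in\mathcal{S}$ and a Borel set $B\subseteq[0,1]$,
$$
\delta_\bullet^{-1}\big(\pi_A^{-1}(B)\big)=(\pi_A\circ\delta_\bullet)^{-1}(B),
$$
so it is enough to check that each composition $\pi_A\circ\delta_\bullet:S\to[0,1]$ is $(\mathcal{S},\mathcal{B}([0,1]))$-measurable.

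The key step is then to compute this composition explicitly. By the definitions of $\pi_A$ and of the Dirac measure,
$$
(\pi_A\circ\delta_\bullet)(s)=\pi_A(\delta_s)=\delta_s(A)=\mathbf{1}_A(s),
$$
the indicator function of $A$. Since $A\in\mathcal{S}$, the indicator $\mathbf{1}_A$ is $\mathcal{S}$-measurable, taking the value $1$ on $A$ and $0$ on $S\setminus A$. As this holds for every $A\in\mathcal{S}$, every composition $\pi_A\circ\delta_\bullet$ is measurable, and the reduction above yields the claim.

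The hard part, such as it is, lies only in the generating-class reduction of the first paragraph, namely the justification that to test measurability of a map \emph{into} $(\mathcal{M}(S),\mathcal{F})$ one needs only to test it against the generating maps $\pi_A$ rather than against every element of $\mathcal{F}$. This is a routine ``good sets'' argument, but it is the one place where the specific choice of $\mathcal{F}$ enters essentially; everything else is an immediate consequence of the identity $\delta_s(A)=\mathbf{1}_A(s)$.
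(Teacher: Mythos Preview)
Your proof is correct and follows essentially the same approach as the paper: both reduce, via a ``good sets'' argument, to checking measurability on the generating sets $\pi_A^{-1}(B)$, and then compute the relevant preimages. The only cosmetic difference is that you recognise $\pi_A\circ\delta_\bullet=\mathbf{1}_A$ in one line, whereas the paper spells out the four cases $\{0,1\}\subset B$, $0\in B\not\ni 1$, $1\in B\not\ni 0$, $\{0,1\}\subset B^c$ to reach the same conclusion.
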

\begin{proof}
We need to show that $\{s: \delta_s \in F\} \in \mathcal{S}$ for every $F\in \mathcal{F}$. 

Since $\mathcal{F}$ is generated by the maps $\{\pi_A\}$, 
it is the smallest $\sigma$-algebra that contains all of,
$$\bigcup_{A \in \mathcal{S}}
\sigma\bigl(\{P: \pi_A(P) \in B\}_{ B \in \mathcal{B}([0,1])}\bigr),$$ 
where $\mathcal{B}([0,1])$ is the Borel $\sigma$-algebra on $[0,1]$. So it suffices to show that, for every $A \in \mathcal{S}$,
\begin{equation} \label{justpie}
 \sigma\bigl(\{P: \pi_A(P) \in B\}_{ B \in \mathcal{B}([0,1])}\bigr) 
\subset \mathcal{S}.
\end{equation}
Now we claim that, for every $A \in \mathcal{S}$, 
\begin{equation} 
\label{justjust}
\{s:\delta_s \in \{P: \pi_A(P) \in B\}\}_{ B \in \mathcal{B}([0,1])} 
\in \mathcal{S},
\end{equation}
implies~\eqref{justpie}.
But this follows since the collection of 
sets $F \in \mathcal{F}$ such that $\{s: \delta_s \in F\} \in \mathcal{S}$ 
forms a $\sigma$-algebra~\cite{williams:book}.

So it only remains to establish~\eqref{justjust}. 
Consider four cases:  
$(\{0,1\} \subset B),$ 
$(0 \in B, 1 \notin B),$
$(1 \in B, 0 \notin B),$
and $( \{0,1\} \in B^c)$.
For each of these cases, the set on the left-hand 
side of~\eqref{justjust} is simply $S$,  
$A^c$, $A$, and $\emptyset$, respectively,
all of which are in $\mathcal{S}$.

The result follows. 
\end{proof}

\bibliography{ik}

\def\cprime{$'$}
\begin{thebibliography}{100}

\bibitem{alon-spencer:book}
N.~Alon and J.H. Spencer.
\newblock {\em The probabilistic method}.
\newblock John Wiley \& Sons, New York, NY, 2004.

\bibitem{anantharam:22}
V.~Anantharam, V.~Jog, and C.~Nair.
\newblock Unifying the {Brascamp-Lieb} inequality and the entropy power
  inequality.
\newblock {\em IEEE Trans. Inform. Theory}, 68(12):7665--7684, December 2022.

\bibitem{artstein:04}
S.~Artstein, K.~Ball, F.~Barthe, and A.~Naor.
\newblock Solution of {Shannon's} problem on the monotonicity of entropy.
\newblock {\em J. Amer. Math. Soc.}, 17(4):975--982, April 2004.

\bibitem{bakry-emery:85}
D.~Bakry and M.~{\'E}mery.
\newblock Diffusions hypercontractives.
\newblock In {\em S{\'e}minaire de Probabilit{\'e}s XIX 1983/84}, pages
  177--206. Springer, Berlin, 1985.

\bibitem{bakry:book}
D.~Bakry, I.~Gentil, and M.~Ledoux.
\newblock {\em Analysis and geometry of Markov diffusion operators}, volume 348
  of {\em Grundlehren der mathematischen Wissenschaften}.
\newblock Springer, Cham, Germany, 2014.

\bibitem{barber:23}
R.F. Barber, E.J. Cand\`{e}s, A.~Ramdas, and R.J. Tibshirani.
\newblock De {Finetti's} theorem and related results for infinite weighted
  exchangeable sequences.
\newblock {\em arXiv e-prints}, \texttt{2304.03927 [math.ST]}, April 2023.

\bibitem{Kcompound:10}
A.D. Barbour, O.~Johnson, I.~Kontoyiannis, and M.~Madiman.
\newblock Compound {Poisson} approximation via information functionals.
\newblock {\em Electron. J. Probab}, 15:1344--1369, August 2010.

\bibitem{barron:1}
A.R. Barron.
\newblock The strong ergodic theorem for densities: {G}eneralized
  {S}hannon-{M}cmillan-{B}reiman theorem.
\newblock {\em Ann. Probab.}, 13(4):1292--1303, November 1985.

\bibitem{barron:clt}
A.R. Barron.
\newblock Entropy and the central limit theorem.
\newblock {\em Ann. Probab.}, 14(1):336--342, January 1986.

\bibitem{barron:isit91}
A.R. Barron.
\newblock Information theory and martingales.
\newblock In {\em 1991 IEEE International Symposium on Information Theory
  (ISIT)}, Budapest, Hungary, June 1991.

\bibitem{barron:97}
A.R. Barron.
\newblock Information theory in probability, statistics, learning, and neural
  nets.
\newblock In Y.~Freundand and R.E. Schapire, editors, {\em Proceedings of the
  Tenth Annual Conference on Computational Learning Theory (COLT)}, Nashville,
  Tennessee, July 1997.
\newblock Available at \texttt{www.stat.yale.edu/\texttildelow
  arb4/publications\_files/COLT97.pdf}.

\bibitem{barron:isit00}
A.R. Barron.
\newblock Limits of information, {M}arkov chains, and projection.
\newblock In {\em 2000 IEEE International Symposium on Information Theory
  (ISIT)}, Sorrento, Italy, June 2000.

\bibitem{bayarri:04}
M.~Bayarri and J.O. Berger.
\newblock The interplay of {Bayesian} and frequentist analysis.
\newblock {\em Statist. Sci.}, 19(1):58--80, February 2004.

\bibitem{gavalakis-berta:24}
M.~Berta, L.~Gavalakis, and I.~Kontoyiannis.
\newblock A third information-theoretic approach to finite de {Finetti}
  theorems.
\newblock In {\em 2024 IEEE International Symposium on Information Theory
  (ISIT)}, Athens, Greece, July 2024.

\bibitem{blachman:65}
N.M. Blachman.
\newblock The convolution inequality for entropy powers.
\newblock {\em IEEE Trans. Inform. Theory}, 11(2):267--271, April 1965.

\bibitem{bobkov:99}
S.G. Bobkov and F.~G{\"o}tze.
\newblock Exponential integrability and transportation cost related to
  logarithmic {S}obolev inequalities.
\newblock {\em J. Funct. Anal.}, 163(1):1--28, 1999.

\bibitem{bobkov:12}
S.G. Bobkov and M.~Madiman.
\newblock Reverse {B}runn-{M}inkowski and reverse entropy power inequalities
  for convex measures.
\newblock {\em J. Funct. Anal.}, 262:3309--3339, April 2012.

\bibitem{boucheron:book}
S.~Boucheron, G.~Lugosi, and P.~Massart.
\newblock {\em Concentration inequalities: {A} nonasymptotic theory of
  independence}.
\newblock Oxford University Press, Oxford, U.K., 2013.

\bibitem{brown:86}
L.D. Brown.
\newblock {\em Fundamentals of statistical exponential families: With
  applications in statistical decision theory}, volume~9 of {\em IMS Lecture
  Notes Monograph Series}.
\newblock Institute of Mathematical Statistics, Hayward, CA, 1986.

\bibitem{carlen:09}
E.A Carlen and D.~Cordero-Erausquin.
\newblock Subadditivity of the entropy and its relation to {Brascamp-Lieb} type
  inequalities.
\newblock {\em Geometric \& Functional Analysis}, 19(2):373--405, 2009.

\bibitem{cohn:book}
D.L. Cohn.
\newblock {\em Measure theory}.
\newblock Birkh\"{a}user, New York, NY, 2013.

\bibitem{costa-cover:84}
M.H.M. Costa and T.M. Cover.
\newblock On the similarity of the entropy power inequality and the
  {B}runn-{M}inkowski inequality.
\newblock {\em IEEE Trans. Inform. Theory}, 30(6):837--839, November 1984.

\bibitem{courtade:21}
T.A. Courtade and J.~Liu.
\newblock Euclidean forward-reverse brascamp-lieb inequalities: {Finiteness},
  structure, and extremals.
\newblock {\em The Journal of Geometric Analysis}, 31(4):3300--3350, March
  2021.

\bibitem{cover:book2}
T.M. Cover and J.A. Thomas.
\newblock {\em Elements of information theory}.
\newblock John Wiley \& Sons, New York, NY, second edition, 2012.

\bibitem{csiszar:67}
I.~Csisz{\'a}r.
\newblock Information-type measures of difference of probability distributions
  and indirect observations.
\newblock {\em Studia Sci. Math. Hungar.}, 2:299--318, 1967.

\bibitem{csiszar:75}
I.~Csisz{\'{a}}r.
\newblock ${I}$-divergence geometry of probability distributions and
  minimization problems.
\newblock {\em Ann. Probab.}, 3(1):146--158, February 1975.

\bibitem{csiszar:84}
I.~Csisz{\'a}r.
\newblock Sanov property, generalized ${I}$-projection and a conditional limit
  theorem.
\newblock {\em Ann. Probab.}, 12(3):768--793, August 1984.

\bibitem{csiszar:97}
I.~Csisz{\'a}r.
\newblock Information theoretic methods in probability and statistics.
\newblock In {\em 1997 IEEE International Symposium on Information Theory
  (ISIT)}, Ulm, Germany, June 1997.

\bibitem{csiszar:98}
I.~Csisz{\'a}r.
\newblock The method of types.
\newblock {\em IEEE Trans. Inform. Theory}, 44(6):2505--2523, October 1998.

\bibitem{csiszar-shields:04}
I.~Csisz{\'{a}}r and P.~Shields.
\newblock Information theory and statistics: {A} tutorial.
\newblock {\em Foundations and Trends in Communications and Information
  Theory}, 1(4):417--528, December 2004.

\bibitem{davies-simon:84}
E.B. Davies and B.~Simon.
\newblock Ultracontractivity and the heat kernel for {S}chr\"odinger operators
  and {D}irichlet {L}aplacians.
\newblock {\em J. Funct. Anal.}, 59(2):335--395, 1984.

\bibitem{definetti:31}
B.~De~Finetti.
\newblock Sul significato soggettivo della probabilita.
\newblock {\em Fundam. Math.}, 17(1):298--329, 1931.

\bibitem{definetti:37}
B.~De~Finetti.
\newblock La pr{\'e}vision: {Ses} lois logiques, ses sources subjectives.
\newblock {\em Ann. Inst. Henri Poincar\'e}, 7(1):1--68, 1937.

\bibitem{deklerk:06}
E.~{de Klerk}, M.~Laurent, and P.A. Parrilo.
\newblock A {PTAS} for the minimization of polynomials of fixed degree over the
  simplex.
\newblock {\em Theor. Comput. Sci.}, 361(2):210--225, 2006.

\bibitem{deklerk:15}
E.~{de Klerk}, M.~Laurent, and Z.~Sun.
\newblock An alternative proof of a {PTAS} for fixed-degree polynomial
  optimization over the simplex.
\newblock {\em Math. Program.}, 151:433--457, 2015.

\bibitem{dembo-cover-thomas:91}
A.~Dembo, T.M. Cover, and J.A. Thomas.
\newblock Information-theoretic inequalities.
\newblock {\em IEEE Trans. Inform. Theory}, 37(6):1501--1518, November 1991.

\bibitem{dembo-zeitouni:book}
A.~Dembo and O.~Zeitouni.
\newblock {\em Large deviations techniques and applications}.
\newblock Springer-Verlag, New York, NY, second edition, 1998.

\bibitem{diaconis:77}
P.~Diaconis.
\newblock Finite forms of de {F}inetti's theorem on exchangeability.
\newblock {\em Synthese}, 36(2):271--281, 1977.

\bibitem{diaconis-freedman:80}
P.~Diaconis and D.A. Freedman.
\newblock De {F}inetti's theorem for {M}arkov chains.
\newblock {\em Ann. Probab.}, 8(1):115--130, 1980.

\bibitem{diaconis-freedman:80b}
P.~Diaconis and D.A. Freedman.
\newblock Finite exchangeable sequences.
\newblock {\em Ann. Probab.}, 8(4):745--764, 1980.

\bibitem{diaconis-freedman:87}
P.~Diaconis and D.A. Freedman.
\newblock A dozen de {F}inetti-style results in search of a theory.
\newblock {\em Ann. Inst. Henri Poincar\'e}, 23(2, suppl.):397--423, 1987.

\bibitem{diaconis-freedman:88}
P.~Diaconis and D.A. Freedman.
\newblock Conditional limit theorems for exponential families and finite
  versions of de {Finetti's} theorem.
\newblock {\em J. Theoret. Probab.}, 1(4):381--410, 1988.

\bibitem{diaconis-janson:08}
P.~Diaconis and S.~Janson.
\newblock Graph limits and exchangeable random graphs.
\newblock {\em Rend. Mat. Appl.}, 28(1):33--61, 2008.

\bibitem{dudley:book}
R.M. Dudley.
\newblock {\em Real analysis and probability}.
\newblock Cambridge University Press, Cambridge, U.K., second edition, 2002.

\bibitem{fradelizi:arxiv24}
M.~Fradelizi, L.~Gavalakis, and M.~Rapaport.
\newblock On the monotonicity of discrete entropy for log-concave random
  vectors on {${\mathbb Z}^d$}.
\newblock {\em arXiv e-prints}, \texttt{2401.15462 [math.PR]}, January 2024.

\bibitem{freedman:77}
D.A. Freedman.
\newblock A remark on the difference between sampling with and without
  replacement.
\newblock {\em J. Amer. Statist. Assoc.}, 72(359):681--681, 1977.

\bibitem{fritz:73}
J.~Fritz.
\newblock An information-theoretical proof of limit theorems for reversible
  {M}arkov processes.
\newblock In {\em Transactions of the Sixth Prague Conference on Information
  Theory, Statistical Decision Functions, Random Processes (Tech. Univ.,
  Prague, 1971; dedicated to the memory of Anton\'\i n \v Spa\v cek)}, pages
  183--197. Academia, Prague, 1973.

\bibitem{gammerman:98}
A.~Gammerman, V.~Vovk, and V.~Vapnik.
\newblock Learning by transduction.
\newblock In {\em Proceedings of the Fourteenth Conference on Uncertainty in
  Artificial Intelligence}, pages 148--155, San Francisco, CA, USA, 1998.

\bibitem{gavalakis-doubling:arxiv}
L.~Gavalakis, , I.~Kontoyiannis, and M.~Madiman.
\newblock The entropic doubling constant and robustness of {Gaussian} codebooks
  for additive-noise channels.
\newblock {\em arXiv e-prints}, \texttt{2403.07209 [math.PR]}, March 2024.

\bibitem{gavalakis:24}
L.~Gavalakis.
\newblock Approximate discrete entropy monotonicity for log-concave sums.
\newblock {\em Comb., Probab. Comput.}, 33(2):196--209, 2024.

\bibitem{gavalakis:21}
L.~Gavalakis and I.~Kontoyiannis.
\newblock An information-theoretic proof of a finite de {F}inetti theorem.
\newblock {\em Electron. Comm. Probab.}, 26:1--5, 2021.

\bibitem{gavalakis-LNM:23}
L.~Gavalakis and I.~Kontoyiannis.
\newblock Information in probability: {Another} information-theoretic proof of
  a finite de {Finetti} theorem.
\newblock In J.-M. Morel and B.~Teissier, editors, {\em Mathematics Going
  Forward: Collected Mathematical Brushstrokes}, volume LNM 2313 of {\em
  Lecture Notes in Mathematics}. Springer, May 2023.

\bibitem{gavalakis:clt}
L.~Gavalakis and I.~Kontoyiannis.
\newblock Entropy and the discrete central limit theorem.
\newblock {\em Stoch. Proc. Appl.}, 170:104294, June 2024.

\bibitem{gross:75}
L.~Gross.
\newblock Logarithmic {S}obolev inequalities.
\newblock {\em Amer. J. Math.}, 97(4):1061--1083, 1975.

\bibitem{guo:05}
D.~Guo, S.~Shamai, and S.~Verd{\'u}.
\newblock Mutual information and minimum mean-square error in {Gaussian}
  channels.
\newblock {\em IEEE Trans. Inform. Theory}, 51(4):1261--1282, April 2005.

\bibitem{hajek:58a}
J.~H{\'a}jek.
\newblock On a property of normal distributions of any stochastic process.
\newblock {\em Czechoslovak Math. J.}, 8(4):610--618, 1958.

\bibitem{hajek:58b}
J.~H{\'a}jek.
\newblock A property of {J}-divergences of marginal probability distributions.
\newblock {\em Czechoslovak Math. J.}, 8(3):460--463, 1958.

\bibitem{halberstam:24}
N.~Halberstam and T.~Hutchcroft.
\newblock Proof of the {Diaconis--Freedman} conjecture on
  partially-exchangeable processes.
\newblock {\em arXiv e-prints}, \texttt{2405.20267 [math.PR]}, May 2024.

\bibitem{harremoes:01}
P.~Harremo{\"e}s.
\newblock Binomial and {P}oisson distributions as maximum entropy
  distributions.
\newblock {\em IEEE Trans. Inform. Theory}, 47(5):2039--2041, July 2001.

\bibitem{HJK:ITW08}
P.~Harremo{\"e}s, O.~Johnson, and I.~Kontoyiannis.
\newblock Thinning and information projections.
\newblock In {\em 2008 IEEE Workshop on Information Theory (ITW)}, pages
  2644--2648, Porto, Portugal, May 2008.

\bibitem{konto-H-J:10}
P.~Harremo{\"e}s, O.~Johnson, and I.~Kontoyiannis.
\newblock Thinning, entropy, and the law of thin numbers.
\newblock {\em IEEE Trans. Inform. Theory}, 56(9):4228--4244, 2010.

\bibitem{harremoes:20}
P.~Harremo{\"e}s and F.~Mat\'{u}\u{s}.
\newblock Bounds on the information divergence for hypergeometric
  distributions.
\newblock {\em Kybernetika}, 56(6):1111--1132, 2020.

\bibitem{hewitt-savage:55}
E.~Hewitt and L.J. Savage.
\newblock Symmetric measures on {C}artesian products.
\newblock {\em Trans. Amer. Math. Soc.}, 80(2):470--501, 1955.

\bibitem{johnson:book}
O.~Johnson.
\newblock {\em Information theory and the central limit theorem}.
\newblock Imperial College Press, London, U.K., 2004.

\bibitem{johnson-barron:04}
O.~Johnson and A.R. Barron.
\newblock Fisher information inequalities and the central limit theorem.
\newblock {\em Probab. Theory Related Fields}, 129(3):391--409, July 2004.

\bibitem{gavalakis-olly:arxiv}
O.~Johnson, L.~Gavalakis, , and I.~Kontoyiannis.
\newblock Relative entropy bounds for sampling with and without replacement.
\newblock {\em arXiv e-prints}, \texttt{2404.03662 [math.PR]}, April 2024.

\bibitem{jKm:13}
O.~Johnson, I.~Kontoyiannis, and M.~Madiman.
\newblock Log-concavity, ultra-log-concavity, and a maximum entropy property of
  discrete compound {Poisson} measures.
\newblock {\em Discret. Appl. Math.}, 161(9):1232--1250, 2013.

\bibitem{primes:ITW08}
I.~Kontoyiannis.
\newblock Counting the primes using entropy.
\newblock In {\em 2008 IEEE Workshop on Information Theory (ITW)}, pages
  268--268, Porto, Portugal, May 2008.

\bibitem{kontoyiannis:primes}
I.~Kontoyiannis.
\newblock Some information-theoretic computations related to the distribution
  of prime numbers.
\newblock In P.~Grunwald, P.~Myllymaki, I.~Tabus, M.~Weinberger, and B.~Yu,
  editors, {\em Festschrift in Honor of Jorma Rissanen}, pages 135--143.
  Tampere University Press, May 2008.

\bibitem{konto-H-J:05}
I.~Kontoyiannis, P.~Harremo{\"e}s, and O.~Johnson.
\newblock Entropy and the law of small numbers.
\newblock {\em IEEE Trans. Inform. Theory}, 51(2):466--472, February 2005.

\bibitem{KM:14}
I.~Kontoyiannis and M.~Madiman.
\newblock Sumset and inverse sumset inequalities for differential entropy and
  mutual information.
\newblock {\em IEEE Trans. Inform. Theory}, 60(8):4503--4514, August 2014.

\bibitem{K-madiman:ITW04}
L.~Kontoyiannis and M.~Madiman.
\newblock Entropy, compound poisson approximation, {log-Sobolev} inequalities
  and measure concentration.
\newblock In {\em 2004 IEEE Workshop on Information Theory (ITW)}, pages
  71--75, San Antonio, TX, October 2004.

\bibitem{kullback:67}
S.~Kullback.
\newblock A lower bound for discrimination information in terms of variation.
\newblock {\em IEEE Trans. Inform. Theory}, 13(1):126--127, January 1967.

\bibitem{ledoux:96}
M.~Ledoux.
\newblock Isoperimetry and {G}aussian analysis.
\newblock In {\em Lectures on probability theory and statistics}, volume 1648
  of {\em Lecture Notes in Mathematics}, pages 165--294. Springer, Berlin,
  1996.

\bibitem{ledoux:97}
M.~Ledoux.
\newblock On {T}alagrand's deviation inequalities for product measures.
\newblock {\em ESAIM Probab. Statist.}, 1:63--87, 1997.

\bibitem{ledoux:book}
M.~Ledoux.
\newblock {\em The concentration of measure phenomenon}.
\newblock American Mathematical Society, Providence, RI, 2001.

\bibitem{linnik:59}
Ju.V. Linnik.
\newblock An information-theoretic proof of the central limit theorem with
  {L}indeberg conditions.
\newblock {\em Theory Probab. Appl.}, 4:288--299, 1959.

\bibitem{courtade:16}
J.~Liu, T.A. Courtade, P.~Cuff, and S.~Verd{\'u}.
\newblock Brascamp-{L}ieb inequality and its reverse: {A}n information
  theoretic view.
\newblock In {\em 2016 IEEE International Symposium on Information Theory
  (ISIT)}, pages 1048--1052, Barcelona, Spain, July 2016.

\bibitem{madiman:07}
M.~Madiman and A.R Barron.
\newblock Generalized entropy power inequalities and monotonicity properties of
  information.
\newblock {\em IEEE Trans. Inform. Theory}, 53(7):2317--2329, July 2007.

\bibitem{KM:16}
M.~Madiman and I.~Kontoyiannis.
\newblock Entropy bounds on abelian groups and the {Ruzsa} divergence.
\newblock {\em IEEE Trans. Inform. Theory}, 64(1):77--92, January 2016.

\bibitem{madiman:17}
M.~Madiman, J.~Melbourne, and P.~Xu.
\newblock Forward and reverse entropy power inequalities in convex geometry.
\newblock In E.~Carlen, M.~Madiman, and E.~M. Werner, editors, {\em Convexity
  and Concentration}, volume 161 of {\em IMA Volumes in Mathematics and its
  Applications}, pages 427--485. Springer, 2017.

\bibitem{marton:96}
K.~Marton.
\newblock Bounding {$\overline d$}-distance by informational divergence: {A}
  method to prove measure concentration.
\newblock {\em Ann. Probab.}, 24(2):857--866, April 1996.

\bibitem{pardo:03}
M.C. Pardo and I.~Vajda.
\newblock On asymptotic properties of information-theoretic divergences.
\newblock {\em IEEE Trans. Inform. Theory}, 49(7):1860--1867, July 2003.

\bibitem{pinsker:book}
M.S. Pinsker.
\newblock {\em Information and information stability of random variables and
  processes}.
\newblock Holden-Day, San Fransisco, 1964.

\bibitem{raginsky:book}
M.~Raginsky and I.~Sason.
\newblock Concentration of measure inequalities in information theory,
  communications and coding.
\newblock {\em Foundations and Trends in Communications and Information
  Theory}, 10(1-2):1--246, October 2018.

\bibitem{rao:66}
J.N.K. Rao.
\newblock On the comparison of sampling with and without replacement.
\newblock {\em Review of the International Statistical Institute},
  34(2):125--138, 1966.

\bibitem{renyi:61}
A.~R{\'e}nyi.
\newblock On measures of entropy and information.
\newblock In {\em Proc. 4th Berkeley Sympos. Math. Statist. and Prob., Vol. I},
  pages 547--561. Univ. California Press, Berkeley, CA, 1961.

\bibitem{Ruz09}
I.Z. Ruzsa.
\newblock Sumsets and entropy.
\newblock {\em Random Struct. Algorithms}, 34(1):1--10, January 2009.

\bibitem{saunders:99}
C.~Saunders, A.~Gammerman, and V.~Vovk.
\newblock Transduction with confidence and credibility.
\newblock In {\em Sixteenth International Joint Conference on Artificial
  Intelligence (IJCAI '99)}, pages 722--726, Stockholm, Sweden, August 1999.

\bibitem{shannon:48}
C.E. Shannon.
\newblock A mathematical theory of communication.
\newblock {\em Bell System Tech. J.}, 27(3):379--423, 623--656, 1948.

\bibitem{yu:24}
R.~Song, K.M. Attiah, and W.~Yu.
\newblock Coded downlink massive random access and a finite de {F}inetti
  theorem.
\newblock {\em arXiv e-prints}, \texttt{2405.08301 [cs.IT]}, May 2024.

\bibitem{stam:59}
A.J. Stam.
\newblock Some inequalities satisfied by the quantities of information of
  {F}isher and {S}hannon.
\newblock {\em Inf. Contr.}, 2(2):101--112, 1959.

\bibitem{stam:78}
A.J. Stam.
\newblock Distance between sampling with and without replacement.
\newblock {\em Statistica Neerlandica}, 32(2):81--91, June 1978.

\bibitem{tang:23}
W.~Tang.
\newblock Finite and infinite weighted exchangeable sequences.
\newblock {\em arXiv e-prints}, \texttt{2306.11584 [math.ST]}, June 2023.

\bibitem{tao:10}
T.~Tao.
\newblock Sumset and inverse sumset theory for {Shannon} entropy.
\newblock {\em Comb., Probab. Comput.}, 19(4):603--639, July 2010.

\bibitem{thompson:12}
S.K. Thompson.
\newblock {\em Sampling}.
\newblock John Wiley \& Sons, Hoboken, NJ, 2012.

\bibitem{tulino:06}
A.M. Tulino and S.~Verd{\'u}.
\newblock Monotonic decrease of the non-{Gaussianness} of the sum of
  independent random variables: {A} simple proof.
\newblock {\em IEEE Trans. Inform. Theory}, 52(9):4295--4297, September 2006.

\bibitem{villani:book}
C.~Villani.
\newblock {\em Optimal transport: Old and new}.
\newblock Springer, Berlin, 2009.

\bibitem{voiculescu:I}
D.~Voiculescu.
\newblock The analogues of entropy and of {F}isher's information measure in
  free probability theory, {I}.
\newblock {\em Comm. Math. Phys.}, 155(1):71--92, 1993.

\bibitem{voiculescu:II}
D.~Voiculescu.
\newblock The analogues of entropy and of {F}isher's information measure in
  free probability theory, {II}.
\newblock {\em Invent. Math.}, 118(1):411--440, 1994.

\bibitem{voiculescu:III}
D.~Voiculescu.
\newblock The analogues of entropy and of {Fisher's} information measure in
  free probability theory, {III}: {The} absence of {Cartan} subalgebras.
\newblock {\em Geometric \& Functional Analysis}, 6(1):172--199, 1996.

\bibitem{voiculescu:IV}
D.~Voiculescu.
\newblock The analogues of entropy and of {F}isher's information measure in
  free probability theory, {IV}: {M}aximum entropy and freeness, in free
  probability theory.
\newblock {\em Fields Inst. Commun.}, 12:293--302, 1997.

\bibitem{williams:book}
D.~Williams.
\newblock {\em Probability with martingales}.
\newblock Cambridge University Press, Cambridge, U.K., 1991.

\end{thebibliography}
 
\end{document}